\newtheorem{thm}[equation]{Theorem}
\numberwithin{equation}{section}
\newtheorem{lem}[equation]{Lemma}
\newtheorem{notation}[equation]{Notation}
\begin{document}
\raggedbottom \voffset=-.7truein \hoffset=0truein \vsize=8truein
\hsize=6truein \textheight=8truein \textwidth=6truein
\baselineskip=18truept

\def\mapright#1{\ \smash{\mathop{\longrightarrow}\limits^{#1}}\ }
\def\mapleft#1{\smash{\mathop{\longleftarrow}\limits^{#1}}}
\def\mapup#1{\Big\uparrow\rlap{$\vcenter {\hbox {$#1$}}$}}
\def\mapdown#1{\Big\downarrow\rlap{$\vcenter {\hbox {$\ssize{#1}$}}$}}
\def\mapne#1{\nearrow\rlap{$\vcenter {\hbox {$#1$}}$}}
\def\mapse#1{\searrow\rlap{$\vcenter {\hbox {$\ssize{#1}$}}$}}
\def\mapr#1{\smash{\mathop{\rightarrow}\limits^{#1}}}
\def\ss{\smallskip}
\def\vp{v_1^{-1}\pi}
\def\at{{\widetilde\alpha}}
\def\sm{\wedge}
\def\la{\langle}
\def\ra{\rangle}
\def\on{\operatorname}
\def\spin{\on{Spin}}
\def\kbar{{\overline k}}
\def\qed{\quad\rule{8pt}{8pt}\bigskip}
\def\ssize{\scriptstyle}
\def\a{\alpha}
\def\bz{{\Bbb Z}}
\def\im{\on{im}}
\def\ct{\widetilde{C}}
\def\ext{\on{Ext}}
\def\sq{\on{Sq}}
\def\eps{\epsilon}
\def\ar#1{\stackrel {#1}{\rightarrow}}
\def\br{{\Bbb R}}
\def\bC{{\bold C}}
\def\bA{{\bold A}}
\def\bB{{\bold B}}
\def\bD{{\bold D}}
\def\bh{{\bold H}}
\def\bQ{{\bold Q}}
\def\bP{{\bold P}}
\def\bx{{\bold x}}
\def\bo{{\bold{bo}}}
\def\si{\sigma}
\def\Ebar{{\overline E}}
\def\dbar{{\overline d}}
\def\Sum{\sum}
\def\tfrac{\textstyle\frac}
\def\tb{\textstyle\binom}
\def\Si{\Sigma}
\def\w{\wedge}
\def\equ{\begin{equation}}
\def\b{\beta}
\def\G{\Gamma}
\def\g{\gamma}
\def\k{\kappa}
\def\psit{\widetilde{\Psi}}
\def\tht{\widetilde{\Theta}}
\def\psiu{{\underline{\Psi}}}
\def\thu{{\underline{\Theta}}}
\def\aee{A_{\text{ee}}}
\def\aeo{A_{\text{eo}}}
\def\aoo{A_{\text{oo}}}
\def\aoe{A_{\text{oe}}}
\def\fbar{{\overline f}}
\def\endeq{\end{equation}}
\def\sn{S^{2n+1}}
\def\zp{\bold Z_p}
\def\A{{\cal A}}
\def\P{{\mathcal P}}
\def\cj{{\cal J}}
\def\zt{{\Bbb Z}_2}
\def\bs{{\bold s}}
\def\bof{{\bold f}}
\def\bq{{\bold Q}}
\def\be{{\bold e}}
\def\Hom{\on{Hom}}
\def\ker{\on{ker}}
\def\coker{\on{coker}}
\def\da{\downarrow}
\def\colim{\operatornamewithlimits{colim}}
\def\zphat{\bz_2^\wedge}
\def\io{\iota}
\def\Om{\Omega}
\def\Prod{\prod}
\def\e{{\cal E}}
\def\exp{\on{exp}}
\def\kbar{{\overline w}}
\def\xbar{{\overline x}}
\def\ybar{{\overline y}}
\def\zbar{{\overline z}}
\def\ebar{{\overline e}}
\def\nbar{{\overline n}}
\def\rbar{{\overline r}}
\def\et{{\widetilde E}}
\def\ni{\noindent}
\def\coef{\on{coef}}
\def\den{\on{den}}
\def\lcm{\on{l.c.m.}}
\def\vi{v_1^{-1}}
\def\ot{\otimes}
\def\psibar{{\overline\psi}}
\def\mhat{{\hat m}}
\def\exc{\on{exc}}
\def\ms{\medskip}
\def\ehat{{\hat e}}
\def\etao{{\eta_{\text{od}}}}
\def\etae{{\eta_{\text{ev}}}}
\def\dirlim{\operatornamewithlimits{dirlim}}
\def\gt{\widetilde{L}}
\def\lt{\widetilde{\lambda}}
\def\st{\widetilde{s}}
\def\ft{\widetilde{f}}
\def\sgd{\on{sgd}}
\def\lfl{\lfloor}
\def\rfl{\rfloor}
\def\ord{\on{ord}}
\def\gd{{\on{gd}}}
\def\rk{{{\on{rk}}_2}}
\def\nbar{{\overline{n}}}
\def\lg{{\on{lg}}}
\def\cR{\mathcal{R}}
\def\cT{\mathcal{T}}
\def\N{{\Bbb N}}
\def\Z{{\Bbb Z}}
\def\Q{{\Bbb Q}}
\def\R{{\Bbb R}}
\def\C{{\Bbb C}}
\def\l{\left}
\def\r{\right}
\def\mo{\on{mod}}
\def\vexp{v_1^{-1}\exp}
\def\notimm{\not\subseteq}
\def\Remark{\noindent{\it  Remark}}

\def\ss{\smallskip}
\def\ssum{\sum\limits}
\def\dsum{\displaystyle\sum}
\def\la{\langle}
\def\ra{\rangle}
\def\on{\operatorname}
\def\o{\on{o}}
\def\U{\on{U}}
\def\lg{\on{lg}}
\def\a{\alpha}
\def\bz{{\Bbb Z}}
\def\eps{\varepsilon}
\def\br{{\Bbb R}}
\def\bc{{\bold C}}
\def\bN{{\bold N}}
\def\nut{\widetilde{\nu}}
\def\tfrac{\textstyle\frac}
\def\b{\beta}
\def\G{\Gamma}
\def\g{\gamma}
\def\zt{{\Bbb Z}_2}
\def\zth{{\bold Z}_2^\wedge}
\def\bs{{\bold s}}
\def\bx{{\bold x}}
\def\bof{{\bold f}}
\def\bq{{\bold Q}}
\def\be{{\bold e}}
\def\lline{\rule{.6in}{.6pt}}
\def\xb{{\overline x}}
\def\xbar{{\overline x}}
\def\ybar{{\overline y}}
\def\zbar{{\overline z}}
\def\ebar{{\overline \be}}
\def\nbar{{\overline n}}
\def\rbar{{\overline r}}
\def\Mbar{{\overline M}}
\def\et{{\widetilde e}}
\def\ni{\noindent}
\def\ms{\medskip}
\def\ehat{{\hat e}}
\def\xhat{{\widehat x}}
\def\nbar{{\overline{n}}}
\def\minp{\min\nolimits'}
\def\N{{\Bbb N}}
\def\Z{{\Bbb Z}}
\def\Q{{\Bbb Q}}
\def\R{{\Bbb R}}
\def\C{{\Bbb C}}
\def\el{\ell}
\def\TC{\on{TC}}
\def\dstyle{\displaystyle}
\def\ds{\dstyle}
\def\Remark{\noindent{\it  Remark}}
\title
{Topological complexity of some planar polygon spaces}
\author{Donald M. Davis}
\address{Department of Mathematics, Lehigh University\\Bethlehem, PA 18015, USA}
\email{dmd1@lehigh.edu}
\date{July 6, 2015}

\keywords{Topological complexity,  planar polygon spaces}
\thanks {2000 {\it Mathematics Subject Classification}: 58D29, 55R80, 70G40
.}

\maketitle
\begin{abstract} Let $\Mbar_{n,r}$ denote the space of isometry classes of $n$-gons in the plane with one side of length $r$ and all others of length 1, and assume that $n-r$ is not an odd integer. Using known results about the mod-2 cohomology ring, we prove that its topological complexity satisfies $\TC(\Mbar_{n,r})\ge 2n-6$. Since $\Mbar_{n,r}$ is an $(n-3)$-manifold, $\TC(\Mbar_{n,r})\le 2n-5$. So our result is within 1 of optimal.
 \end{abstract}

\section{Statement of results}\label{intro}
The topological complexity, $\TC(X)$, of a topological space $X$ is, roughly, the number of rules required to specify how to move between any two points of $X$. A ``rule'' must be such that the choice of path varies continuously with the choice of endpoints. (See \cite[\S4]{F}.) We study $\TC(X)$ where $X=\Mbar_{n,r}$ is the space of isometry classes of $n$-gons in the plane with one side of length $r$ and all others of length 1. (See, e.g., \cite[\S9]{HK}.) Here $r$ is a real number satisfying $0<r<n-1$, and $n\ge4$. Thus
$$\Mbar_{n,r}=\{(z_1,\ldots,z_n)\in (S^1)^n:z_1+\cdots+z_{n-1}+rz_n=0\}/O(2).$$
If we think of the sides of the polygon as linked arms of a robot, we might prefer the space $M_{n,r}$, in which we identify only under rotation, and not also under reflection. However, the cohomology algebra of $\Mbar_{n,r}$ is better understood than that of $M_{n,r}$, leading to better bounds on TC.

If $r$ is a positive real number, then $\Mbar_{n,r}$ is a connected $(n-3)$-manifold unless $n-r$ is an odd integer (e.g., \cite[p.314]{HK} or \cite[p.2]{KK}), and hence satisfies \begin{equation}\label{bound}\TC(\Mbar_{n,r})\le 2n-5\end{equation} by \cite[Cor 4.15]{F}.\footnote{If $n-r$ is an odd integer, $\Mbar_{n,r}$ is often not a manifold but still satisfies $\TC(\Mbar_{n,r})\le 2n-5$, by \cite[Theorem 4]{F2}. However, its cohomology algebra is not so well understood in this case, and so we do not study it here.} By \cite[6.2]{Hb}, if, for an integer $k$, $n-2k-1<r<n-2k+1$, then $\Mbar_{n,r}$ is diffeomorphic to $\Mbar_{n,n-2k}$, and so we restrict our discussion to the latter spaces.
In this paper, we obtain the following strong lower bound for $\TC(\Mbar_{n,n-2k})$.

\begin{thm}\label{thm1} If $2<2k<n$, then $\TC(\Mbar_{n,n-2k})\ge 2n-6$.\end{thm}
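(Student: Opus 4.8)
\ms
The plan is to bound $\TC(\Mbar_{n,n-2k})$ from below by its mod-$2$ zero-divisor cup-length. Recall that if $z_1,\dots,z_p$ lie in the kernel of the cup-product (diagonal) map $\Delta^*\colon H^*(\Mbar_{n,n-2k}\times\Mbar_{n,n-2k};\Z_2)\to H^*(\Mbar_{n,n-2k};\Z_2)$ and $z_1\cdots z_p\neq 0$, then $\TC(\Mbar_{n,n-2k})\ge p+1$. By the standing hypothesis $n-(n-2k)=2k$ is even, so $\Mbar_{n,n-2k}$ is a closed connected $(n-3)$-manifold and its top group $H^{n-3}(\Mbar_{n,n-2k};\Z_2)$ is $\Z_2$, spanned by a fundamental class $\omega$. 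It therefore suffices to produce a nonzero product of $2n-7=2(n-3)-1$ zero-divisors, which yields $\TC\ge 2n-6$ and, with \eqref{bound}, pins down $\TC$ to within $1$.

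First I would recall the Hausmann--Knutson presentation of $H^*(\Mbar_{n,n-2k};\Z_2)$: it is $\Z_2[R,V_1,\dots,V_{n-1}]$, with $R$ and each $V_i$ (one per unit-length side) of degree $1$, modulo the relations $V_i^2=RV_i$ together with the long-subset relations for the length vector $(1,\dots,1,n-2k)$ --- concretely, every product of $n-k$ distinct $V_i$'s vanishes, while the $k$-element subsets of unit sides carry the mixed relations that bring in $R$. From this presentation one has the usual additive basis of short-subset monomials $R^{a}\prod_{i\in S}V_i$, and in particular an explicit list of basis monomials in the complementary degrees $n-4$ and $n-3$ whose products realize $\omega$. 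For each generator $x\in\{R,V_1,\dots,V_{n-1}\}$ put $\bar x=x\otimes 1+1\otimes x$, a zero-divisor, and take the candidate product $P$ to be a product of $2n-7$ such factors counted with multiplicity --- e.g.\ a suitable power of $\bar R$ times several distinct $\bar V_i$'s --- with the multiplicities tuned to the combinatorics above.

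To see $P\neq 0$, expand $P=\sum_{T}\bigl(\prod_{j\in T}x_j\bigr)\otimes\bigl(\prod_{j\notin T}x_j\bigr)$ over sub-multisets $T$ of the chosen $2n-7$ factors and project onto the bidegree-$(n-3,\,n-4)$ summand $H^{n-3}\otimes H^{n-4}$. Since every element of $H^{n-3}$ is $0$ or $\omega$, this contribution equals $\omega\otimes\bigl(\sum_T\prod_{j\notin T}x_j\bigr)$, the sum running over the $(n-3)$-element $T$ whose product is $\omega$. One now applies both collapsing mechanisms --- $V_i^2=RV_i$, which merges repeated indices, and the vanishing of overlong $V$-monomials --- to this inner sum; the whole game is to have chosen the multiset of factors so that, after all collapses, exactly one basis monomial of $H^{n-4}$ remains with odd coefficient, whence $P\neq 0$ and $\TC(\Mbar_{n,n-2k})\ge 2n-6$.

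I expect this last step to be the main obstacle, and to hinge on a parity statement about subsets of $\{1,\dots,n-1\}$ relative to the short/long threshold determined by $k$; this is where $2<2k<n$ should be used essentially, being exactly the range in which a class of degree $n-3$ can be assembled from the $V_i$ while the relevant binomial counts fall correctly. A first attempt would be to simplify the bookkeeping by mapping to a quotient of $H^*(\Mbar_{n,n-2k};\Z_2)$ --- say setting $R=0$, or identifying several of the $V_i$ --- in which the surviving monomial is visible by inspection, provided one checks that no spurious cancellation is created; and should the bare cup-length estimate come up one short in a few small cases, one could try to recover the missing unit by feeding the zero-divisors through the mod-$2$ Steenrod operations.
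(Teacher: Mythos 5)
Your overall strategy coincides with the paper's: bound $\TC$ below by exhibiting a nonzero product of $2n-7$ zero-divisors $\bar x=x\ot1+1\ot x$ and detect it in the bidegree $(n-3,n-4)$ component, using the Hausmann--Knutson presentation. The reduction to that component via the one-dimensionality of $H^{n-3}$ is essentially the paper's use of the isomorphism $\phi_1:H^{n-3}(\Mbar_{n,n-2k})\to\zt$ from \cite[Theorem B]{KK}. But the proposal stops exactly where the proof begins. You never specify the multiset of $2n-7$ factors, and you never carry out the nonvanishing verification, which you yourself flag as ``the main obstacle.'' That verification is the entire content of the paper's argument: it requires the explicit evaluation $\phi_1(T_{S,n-3})=\binom{n-2-|S|}{k-1-|S|}$, the construction of a companion functional $\phi_2$ on $H^{n-4}$ (Lemma \ref{n-4lem}) --- note that $H^{n-4}$ is \emph{not} one-dimensional, so ``exactly one basis monomial survives with odd coefficient'' is not how the detection actually works; one must check that a chosen linear functional kills all relations $\cR_{L,n-4}$ and then evaluates the product to $1$ --- a four-way case division governed by the $2$-adic structure of $n$ and $k$ (Notation \ref{nk}: the parity of $B$, and the relative sizes of $C-2^{\lg C}$ and $2^{1+\lg D}$), a different functional $\phi_3$ when $B$ is even and $D=0$, and sustained Lucas-theorem computations (Lemmas \ref{techlem}, \ref{bclem}, and the proofs of Theorems \ref{case4} and \ref{case5}). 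None of this is present or sketched in a way that could be completed routinely; the choice of exponents in $P$ (e.g.\ the exponent $2^t(B+1)-1$ on $\bar V_1$ and the split of the remaining $V_i$'s into $C$ linear and $C$ squared factors) is precisely the delicate tuning the proposal defers.

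Two smaller points. First, your recollection of the presentation is partly reversed: in the normalization used here (Theorem \ref{cohthm}), it is products of $k$ distinct $V_i$'s that vanish, while the mixed relations $\sum_{S\subset L}T_{S,d}=0$ are indexed by subsets $L$ with $n-k\le|L|\le d+1$; getting this threshold wrong would derail the combinatorics. Second, the suggested fallbacks (setting $R=0$, identifying generators, or invoking Steenrod operations) are not used and would not obviously help: the detecting functionals $\phi_2,\phi_3$ are not induced by ring quotients of this simple form, and the bound $2n-7$ is already sharp for zero-divisor cup-length (Theorem \ref{thm3}), so no Steenrod enhancement of this particular estimate is available. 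As it stands the proposal is a correct statement of the standard framework but contains a genuine gap where the theorem's actual proof lives.
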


This result is within 1 of being optimal, using (\ref{bound}).
 The case $k=1$ is special, as $\Mbar_{n,n-2}$ is homeomorphic to real projective space $RP^{n-3}$, for which the topological complexity agrees with the immersion dimension, a much-studied concept, but not yet completely determined.
See, e.g., \cite{FTY}, \cite{D}, or \cite{H}. In fact, there are often large gaps between the known upper and lower bounds for $\TC(RP^n)$.(\cite{Da})

The proof of Theorem \ref{thm1} relies on the  mod 2 cohomology ring $H^*(\Mbar_{n,r};\zt)$, first described in \cite{HK}.
Throughout the paper, all cohomology groups have coefficients in $\zt$, and all congruences are mod 2, unless specifically stated to the contrary.
 To prove Theorem \ref{thm1}, we will find  $2n-7$ classes $y_i\in H^1(\Mbar_{n,n-2k})$ such that $\prod(y_i\ot1+1\ot y_i)\ne0$ in $H^{n-3}(\Mbar_{n,n-2k})\ot H^{n-4}(\Mbar_{n,n-2k})$.
This implies the theorem by the basic result that if in $H^*(X\times X)$ there is an $m$-fold nonzero product of classes of the form $y_i\otimes1+1\otimes y_i$,
 then $\TC(X)\ge m+1$.(\cite[Cor 4.40]{F}) We show at the end of the paper that our cohomology result for $\Mbar_{n,n-2k}$ is optimal, in that $(2n-6)$-fold products of $(y_1\ot1+1\ot y_i)$ are always 0. Thus we will have proved the following result. (See \cite{F2} for the definition.)
 \begin{thm}\label{thm3} If $2<2k<n$, the zero-divisors-cup-length of $H^*(\Mbar_{n,n-2k})$ equals $2n-7$.\end{thm}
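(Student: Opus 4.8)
The plan is to combine the lower bound already established in Theorem \ref{thm1} with a matching upper bound on the zero-divisors-cup-length. Recall that the zero-divisors-cup-length, $\on{zcl}(X)$, is the largest $m$ for which there exist classes $\bar{z}_1,\ldots,\bar{z}_m$ in the kernel of the cup-product map $H^*(X\times X)\to H^*(X)$ (the ``zero-divisors'') whose product is nonzero. The proof of Theorem \ref{thm1} exhibits $2n-7$ zero-divisors of the special form $y_i\ot 1+1\ot y_i$ with $y_i\in H^1$ whose product is nonzero, so $\on{zcl}(H^*(\Mbar_{n,n-2k}))\ge 2n-7$ immediately. The entire content of Theorem \ref{thm3} is therefore the reverse inequality $\on{zcl}(H^*(\Mbar_{n,n-2k}))\le 2n-7$, i.e.\ that no product of $2n-6$ zero-divisors can be nonzero.

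The key structural fact I would use is that $\Mbar_{n,n-2k}$ is a closed connected $(n-3)$-manifold (stated in the excerpt), so $H^*(\Mbar_{n,n-2k};\zt)$ is a Poincar\'e duality algebra of formal dimension $N:=n-3$, and $H^*(\Mbar_{n,n-2k}\times\Mbar_{n,n-2k})$ is a Poincar\'e duality algebra of formal dimension $2N=2n-6$ via the K\"unneth isomorphism. A product of $m$ zero-divisors, each of which can be taken (after filtering by cohomological degree, since we only care about nonvanishing) to have positive total degree, lands in $H^{\ge m}(X\times X)$; for this to be nonzero we need $m\le 2N=2n-6$. This already gives $\on{zcl}\le 2n-6$, so the real work is to rule out the single borderline case $m=2n-6$, where the product would have to be a nonzero element of the top group $H^{2n-6}(X\times X)\cong\zt$, and in fact all $2n-6$ factors would have to lie in $H^1(X\times X)$.

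So the heart of the argument is: if $w_1,\ldots,w_{2n-6}\in H^1(X\times X)$ are zero-divisors (equivalently, writing $w=a\ot 1+1\ot b$, the condition is $a=b$ in $H^1(X)$, so each $w_i=y_i\ot1+1\ot y_i$ for some $y_i\in H^1(X)$), then $\prod w_i=0$ in $H^{2n-6}(X\times X)$. Expanding $\prod(y_i\ot1+1\ot y_i)$ by the binomial/multinomial rule, the component in bidegree $(N,N)=(n-3,n-3)$ is $\sum_S (\prod_{i\in S}y_i)\ot(\prod_{i\notin S}y_i)$, summed over $(n-3)$-element subsets $S$ of $\{1,\ldots,2n-6\}$. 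Under Poincar\'e duality this pairs to the single coefficient $\sum_S \langle \prod_{i\in S}y_i\cdot \prod_{i\notin S}y_i\cup(\text{dual class})\rangle$; more precisely $\prod w_i=0$ iff for every such $S$ the product $\prod_{i\in S}y_i$ and $\prod_{i\notin S}y_i$ are not simultaneously dual top classes, i.e.\ iff the symmetric function condition $\sum_{|S|=n-3}\big(\prod_{i\in S}y_i\big)\big(\prod_{i\notin S}y_i\big)=0$ fails to produce a nonzero pairing. The clean way to finish is to invoke the known presentation of $H^*(\Mbar_{n,r};\zt)$ from \cite{HK}: $H^1$ is spanned by classes $R,V_1,\ldots,V_{n-1}$ (subject to one linear relation), any product of more than $n-3$ of the $V_i$ vanishes, and more refined vanishing relations hold governed by the ``long'' and ``short'' subsets relative to the length vector $(1,\ldots,1,n-2k)$. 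Using these relations one shows that for \emph{any} choice of $2n-6$ classes $y_i\in H^1$, every term in the bidegree-$(N,N)$ expansion already vanishes in $H^N(X)\ot H^N(X)$, because one of the two factors is a product of $\ge n-2$ one-dimensional generators subject to a relation forcing it to zero.

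I expect the main obstacle to be exactly this last combinatorial step: bookkeeping the $\binom{2n-6}{n-3}$ subsets $S$ and checking, using the precise vanishing relations in $H^*(\Mbar_{n,n-2k};\zt)$ recorded in \cite{HK} (and presumably re-derived in this paper for the proof of Theorem \ref{thm1}), that no subset splits the generators into two factors that are both nonzero in top degree. The fact that $2n-6=2N$ is the sharp dimensional bound means the counting has no slack, so the argument must use the algebra structure and not merely the degree count. Once the proof of Theorem \ref{thm1} has laid out the relevant monomial basis and relations, this should reduce to a finite, checkable verification; the remark in the excerpt that ``$(2n-6)$-fold products of $(y_i\ot1+1\ot y_i)$ are always $0$'' indicates the author carries out precisely this verification at the end of the paper, and combining it with Theorem \ref{thm1} yields $\on{zcl}(H^*(\Mbar_{n,n-2k}))=2n-7$. \qed
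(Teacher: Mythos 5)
Your lower bound is fine (it is exactly Theorem \ref{thm1}), and your reduction of the upper bound to degree-one factors via the dimension count is correct and even makes explicit a step the paper leaves implicit: a $(2n-6)$-fold product of zero-divisors can only be nonzero in total degree $2n-6$, so only the degree-one components $y_i\ot1+1\ot y_i$, $y_i\in H^1$, matter. But the heart of the upper bound --- showing that $\prod_{i=1}^{2n-6}(y_i\ot1+1\ot y_i)=0$ for \emph{arbitrary} $y_i\in H^1$ --- is where your proposal has a genuine gap, and the mechanism you propose is not just unproved but false. You claim that in the bidegree-$(n-3,n-3)$ expansion ``every term already vanishes \dots because one of the two factors is a product of $\ge n-2$ one-dimensional generators subject to a relation forcing it to zero.'' In fact each tensor factor of such a term is a product of exactly $n-3$ degree-one classes (the $2n-6$ factors split as $(n-3)+(n-3)$), and such products are frequently nonzero: the paper's own lower-bound argument depends on classes $T_{S,n-3}$ with $\phi_1(T_{S,n-3})=\binom{n-2-|S|}{k-1-|S|}=1$, and already $R^{n-3}\ot R^{n-3}$ is nonzero whenever $\binom{n-2}{k-1}$ is odd. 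The relation ``a product of $k$ distinct $V_i$'s is zero'' does not apply here, since these monomials need not contain $k$ distinct $V_i$'s. So the vanishing is not term-by-term, and your fallback formulation (``no subset splits the generators into two factors that are both nonzero in top degree'') is likewise false; moreover you explicitly defer this verification to the paper, so the upper bound is not actually established in your write-up.

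What actually makes the product vanish is a cancellation and parity argument, which is how the paper concludes. Each factor $y\ot1+1\ot y$ is symmetric under interchanging the tensor factors, so the whole product is; since $H^{n-3}(\Mbar_{n,n-2k})\cong\zt$, any off-diagonal contribution $m_1\ot m_2+m_2\ot m_1$ with $m_1,m_2$ monomials of degree $n-3$ is zero (both summands are $0$ or the same generator), so only the coefficients of the diagonal terms $T_{S,n-3}\ot T_{S,n-3}$ need to vanish. These are killed by induction on $|S|$: the base case is the coefficient of $R^{n-3}\ot R^{n-3}$, which is $\binom{2n-6}{n-3}\equiv0 \pmod 2$ (or $0$ if some factor has no $R$-term), and in the inductive step one discards the factors $V_i\ot1+1\ot V_i$ with $i\notin S$ and uses that the total coefficient sum $\binom{2n-6}{n-3}$ is even together with the vanishing for proper subsets $S'\subset S$. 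Note also that the factors to be handled are not only $V_i\ot1+1\ot V_i$ and $R\ot1+1\ot R$ but arbitrary sums $y\ot1+1\ot y$ with $y\in H^1$, which is precisely why the paper organizes the argument around coefficients of $T_{S,n-3}\ot T_{S,n-3}$ rather than around individual generators; your proposal does not address this step at all.
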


\section{Proof}\label{pfsec}
In this section we prove Theorems \ref{thm1} and \ref{thm3}.
We begin by stating our  interpretation of the cohomology ring $H^*(\Mbar_{n,n-2k})$.
\begin{thm} \label{cohthm} Let $k\ge1$ and $n>2k$.
\begin{enumerate}
\item The algebra $H^*(\Mbar_{n,n-2k})$ is generated by  classes $R,V_1,\ldots,V_{n-1}$ in $H^1(\Mbar_{n,n-2k})$.
\item The product of $k$ distinct $V_i$'s is 0.
\item If $d\le n-3$ and $S\subset\{1,\ldots,n-1\}$ has $|S|<k$,
then all monomials $\dstyle{R^{e_0}\prod_{i\in S}V_i^{e_i}}$ with $e_i>0$ for $i\in S$ and $\dstyle\sum_{i\ge0} e_i=d$ are equal. We denote this class by $T_{S,d}$. This includes the class  $T_{\emptyset,d}=R^d$.
\item  For every subset $L$ of $\{1,\ldots,n-1\}$ with $n-k\le|L|\le d+1$, there is a relation $\cR_{L,d}$ which says
$$\sum_{S\subset L}T_{S,d}=0.$$
    These are the only relations, in addition to those previously described.
    \end{enumerate}
\end{thm}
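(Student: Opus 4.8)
The plan is to deduce the description of $H^*(\Mbar_{n,n-2k})$ from the presentation already established in \cite{HK} for the cohomology of planar polygon spaces. I would start from the known fact that $H^*(\Mbar_{n,r})$ is a quotient of the polynomial algebra $\zt[R,V_1,\ldots,V_{n-1}]$ by three families of relations: the ``hyperplane'' relations $V_i^2 = R V_i$ (reflecting that these are Chern-class-type generators of a quotient of $(RP^\infty)^{n-1}$), the vanishing relations saying that certain products $\prod_{i\in S}V_i$ vanish for $S$ in a combinatorially determined collection, and the relations coming from the fundamental class / Poincar\'e duality. The combinatorics is controlled by which subsets of the side lengths $\{1,\ldots,n-1,r\}$ are ``short'' (can be drawn as a polygon). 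For $r = n-2k$ with $2k<n$, a subset of $\{1,\ldots,n-1\}$ of size $s$ together with the long side $r$ is short precisely when $s \le k-1$ roughly, and a subset not containing the long side is short under a complementary condition; this is exactly what produces the numerics $|S|<k$ in part (3) and $n-k\le|L|$ in part (4). So the first step is to translate the \cite{HK} presentation into these explicit numerical conditions for our particular $r$.

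Given that, parts (1) and (2) are essentially immediate: (1) is the statement that the listed degree-one classes generate, which is part of the \cite{HK} presentation, and (2) is the vanishing relation $\prod_{i\in S}V_i = 0$ for $|S| = k$, hence for any $k$ distinct indices. For part (3), the key is the relation $V_i^2 = RV_i$: using it repeatedly, any monomial $R^{e_0}\prod_{i\in S}V_i^{e_i}$ with all $e_i>0$ for $i\in S$ can be rewritten, by replacing each $V_i^{e_i}$ with $R^{e_i-1}V_i$, as $R^{d-|S|}\prod_{i\in S}V_i$. So all such monomials of total degree $d$ are equal to the single class $R^{d-|S|}\prod_{i\in S}V_i =: T_{S,d}$, and the hypothesis $|S|<k$ guarantees (via part (2)) that we have not landed in the zero ideal for cheap reasons; one must also check $d\le n-3$ ensures this class is genuinely a well-defined element and not forced to zero by the duality relations — this is where I would lean on \cite{HK} to know the Poincar\'e-duality relations only appear in the top degree $n-3$ and below in the precise form of part (4).

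For part (4), the relation $\cR_{L,d}$ should come from the Poincar\'e-duality / fundamental-class relations of \cite{HK}: in that description there is, for each sufficiently large ``long'' subset $L$, a relation of the form $\sum_{S\subseteq L}\prod_{i\in S}V_i \cdot R^{(\cdots)} = 0$, which under the rewriting of part (3) becomes exactly $\sum_{S\subseteq L}T_{S,d}=0$, valid in degree $d$ as long as $|L|\le d+1$ so that every term $T_{S,d}$ makes sense (needs $|S|\le d$). The condition $n-k\le|L|$ is the combinatorial threshold from \cite{HK} for $L$ to index such a relation when $r=n-2k$. The claim that ``these are the only relations'' is really a restatement of the completeness of the \cite{HK} presentation, reorganized in the $T_{S,d}$ notation; I would verify this by a dimension count, checking that the quotient of $\zt[R,V_i]$ by exactly these relations has the correct Poincar\'e polynomial, which is known for these spaces (e.g.\ via the Morse-theoretic or toric description of $\Mbar_{n,r}$).

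\medskip
The main obstacle I anticipate is \emph{part (4), and specifically the ``only relations'' clause}: it is easy to exhibit the relations $\cR_{L,d}$ as consequences of \cite{HK}, but showing they (together with (2) and the $V_i^2=RV_i$ relations implicit in (3)) generate the \emph{entire} relation ideal requires either a careful bookkeeping argument matching generators and relations against the \cite{HK} presentation term by term, or an independent computation of $\dim_{\zt} H^d(\Mbar_{n,n-2k})$ for all $d$ to confirm the quotient algebra has the right size. Reconciling the indexing of the ``long subset'' relations in \cite{HK} (which is phrased in terms of which subsets of side lengths fail to be short) with the clean bound $n-k\le|L|\le d+1$ used here is the delicate step, since one must be careful about whether the long side $r$ is included and about the off-by-one in passing between ``short'' and the degree $d$.
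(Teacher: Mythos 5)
Your overall route is the same as the paper's: both reduce the statement to the known presentation of the cohomology ring (the paper cites \cite{KK}, which specializes \cite[Cor.~9.2]{HK} to $\Mbar_{n,n-2k}$), and your translation of the short/long combinatorics into the thresholds $|S|<k$ and $|L|\ge n-k$ is correct, as is your treatment of parts (1)--(3) via $V_i^2=RV_i$. The one substantive point, which you correctly identify as the delicate step, is the ``these are the only relations'' clause of part (4) --- but you leave it open, proposing only an uncarried-out Poincar\'e-polynomial count as a fallback. That is a genuine gap as written, and the dimension count is also heavier machinery than is needed.

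The paper closes this gap with a short, purely formal ideal argument that you should be able to reconstruct: the cited presentation gives the relation ideal as \emph{generated} by the elements $\sum_{S\subset L}T_{S,|L|-1}$ for $n-k\le|L|\le n-2$, so the degree-$d$ part of the ideal is spanned by monomial multiples of these generators. Multiplying such a generator by $R^t$ yields exactly $\cR_{L,d}$ with $d=|L|-1+t$, which accounts for all $L$ with $|L|\le d+1$; multiplying by $V_\ell$ with $\ell\notin L$ yields $\cR_{L\cup\{\ell\},|L|}+\cR_{L,|L|}$ (and $0$ if $\ell\in L$), which is already in the span of the listed relations. Hence the $\cR_{L,d}$ exhaust the ideal degree by degree, with no need to compute Betti numbers. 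If you add this two-line closure argument, your proof matches the paper's.
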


\begin{proof} In \cite[Theorem 1]{KK}, the more general result proved in \cite[Corollary 9.2]{HK} is applied to $\Mbar_{n,n-2k}$.
The first three parts of our theorem are immediate from the result stated there, although our $T_{S,d}$ notation is new. The relations stated in \cite{KK}
are in the form of an ideal, whereas we prefer to make a listing of a basic set of relations. The result of \cite{KK} says that the relations
in $H^*(\Mbar_{n,n-2k})$ comprise the ideal generated by
\begin{equation}\label{Treln}\sum_{S\subset L}T_{S,|L|-1} \text{ for }L\subset\{1,\ldots,n-1\}\text{ with }n-k\le|L|\le n-2.\end{equation}
Multiplying this relation by $R^t$ gives a relation $\dstyle{\sum_{S\subset L}T_{S,|L|-1+t}}$. This yields, in degree $d$, exactly all of our claimed relations. Additional relations in the ideal can be obtained by multiplying (\ref{Treln}) by $V_\ell$. If $\ell\not\in L$, this equals
our $\cR_{S\cup\{\ell\},|L|}-\cR_{S,|L|}$, while if $\ell\in L$, it equals 0.
\end{proof}

Most of our proofs also utilize the following key result, which was proved as \cite[Theorem B]{KK}.

\begin{lem}\label{KKB} There is an isomorphism  $\phi_1:H^{n-3}(\Mbar_{n,n-2k})\to\zt$ satisfying $\phi_1(T_{S,n-3})=\binom{n-2-|S|}{k-1-|S|}$.
\end{lem}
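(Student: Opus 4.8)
The plan is to read off the isomorphism $\phi_1$ from the presentation in Theorem~\ref{cohthm}. Since $\Mbar_{n,n-2k}$ is a closed connected $(n-3)$-manifold, $H^{n-3}(\Mbar_{n,n-2k})\cong\zt$, so the only task is to evaluate a linear functional $\phi_1$ that is nonzero, and then to express its value on each monomial basis element $T_{S,n-3}$ (with $|S|<k$) as the stated binomial coefficient $\binom{n-2-|S|}{k-1-|S|}$. First I would recall (from \cite{KK}) that the source of the formula is the identification of the fundamental cohomology class: one checks that $R^{n-3}$ is nonzero in $H^{n-3}$, normalizes $\phi_1(R^{n-3})=1=\binom{n-2}{k-1}\pmod 2$—here $\binom{n-2}{k-1}$ is odd precisely under the hypotheses $n>2k$, $k\ge1$, which is exactly the case at hand—and then the remaining values $\phi_1(T_{S,n-3})$ for $\emptyset\ne S$ are forced by the relations $\cR_{L,n-3}$ of Theorem~\ref{cohthm}(4).

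The key computational step is an induction on $|S|$ using those relations. Fix $S$ with $0<|S|<k$ and choose a set $L$ with $S\subset L\subsetneq\{1,\dots,n-1\}$ and $|L|=n-k$ (possible since $|S|<k<n-k$... more precisely since $|S|\le k-1$ and $n-k\ge k+1>|S|$). The relation $\cR_{L,n-3}$ reads $\sum_{S'\subset L}T_{S',n-3}=0$; but $T_{S',n-3}=0$ whenever $|S'|\ge k$, so this collapses to $\sum_{S'\subset L,\ |S'|<k}T_{S',n-3}=0$. Applying $\phi_1$ and solving for the top term, I would show by induction that the only consistent solution is $\phi_1(T_{S,n-3})=\binom{n-2-|S|}{k-1-|S|}\pmod 2$. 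The inductive check reduces to the Vandermonde/Pascal identity $\sum_{j}\binom{n-k-|S|}{j}\binom{n-2-|S|-j}{k-1-|S|-j}\equiv 0\pmod 2$ when $L\supsetneq S$ has $|L|=n-k$, i.e.\ that the partial alternating (hence, mod~2, ordinary) sum of the binomial coefficients over all $S'$ with $S\subseteq S'\subseteq L$ vanishes; this is a routine binomial identity once set up correctly, and it is consistent with the relations being the \emph{only} relations, so no overdetermination occurs.

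The main obstacle is checking \emph{well-definedness}: a priori different choices of $L\supset S$ with $|L|$ ranging in $[n-k,d+1]=[n-k,n-2]$, or different chains of relations, could impose conflicting values on $\phi_1(T_{S,n-3})$, and one must verify that the binomial-coefficient formula satisfies \emph{all} of the relations $\cR_{L,n-3}$ simultaneously (not just one convenient one). Concretely, for each admissible $L$ one needs $\sum_{S\subset L}\binom{n-2-|S|}{k-1-|S|}\equiv0\pmod2$, i.e.\ $\sum_{j=0}^{|L|}\binom{|L|}{j}\binom{n-2-j}{k-1-j}\equiv0$; this follows from the hockey-stick / Vandermonde identity $\sum_j\binom{|L|}{j}\binom{n-2-j}{k-1-j}=\binom{n-2}{k-1}\cdot(\text{something divisible by }2\text{ when }|L|\ge n-k)$, or more cleanly from Lucas' theorem applied to the range $|L|\ge n-k$. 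Once this compatibility is in hand, $\phi_1$ is a well-defined nonzero functional on $H^{n-3}$, hence the desired isomorphism, and the displayed formula holds by construction. (Alternatively, and perhaps more transparently, one simply cites \cite[Theorem B]{KK} where exactly this computation is carried out; the argument above is the route one would take to reconstruct it.)
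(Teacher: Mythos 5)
The paper does not actually prove this lemma: it is quoted verbatim as \cite[Theorem B]{KK}, so your parenthetical last sentence is the paper's entire ``proof.'' That said, your reconstruction is a legitimate and essentially correct route, and its core computation is identical to the paper's own proof of Lemma \ref{n-4lem} for $\phi_2$: to check that the formula kills $\cR_{L,n-3}$ one evaluates $\sum_{i=0}^{k-1}\binom{\ell}{i}\binom{n-2-i}{k-1-i}\equiv\binom{\ell-n+k}{k-1}$ by Vandermonde, which vanishes since $0\le\ell-n+k\le k-2$ for $n-k\le\ell\le n-2$. (Your ``induction on $|S|$'' framing is unnecessary and slightly miscast --- the relations sum over \emph{all} subsets of $L$, not just those containing a fixed $S$ --- but your final well-definedness paragraph states the right check, and compatibility with parts (2)--(3) of Theorem \ref{cohthm} is automatic because the formula depends only on $|S|$ and vanishes for $|S|\ge k$.)

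There is, however, one genuine error: your claim that $\binom{n-2}{k-1}$ is odd ``precisely under the hypotheses $n>2k$, $k\ge1$'' is false. For $n=10$, $k=4$ one has $\binom{8}{3}=56\equiv0$, so $\phi_1(R^{n-3})=0$ there and your normalization $\phi_1(R^{n-3})=1$ cannot be the anchor of the argument. This matters because the lemma asserts $\phi_1$ is an \emph{isomorphism}, so after establishing well-definedness you still must exhibit some monomial on which $\phi_1$ is nonzero. The repair is available in the paper's Lemma \ref{techlem}: $\binom{n-2-i}{k-1-i}\equiv1$ for $i=C+1=k_0+D\le k-1$, so $\phi_1(T_{S,n-3})=1$ for any $S$ with $|S|=C+1$, and surjectivity follows; combined with $H^{n-3}(\Mbar_{n,n-2k};\zt)\cong\zt$ (mod-$2$ Poincar\'e duality for the closed connected $(n-3)$-manifold), $\phi_1$ is an isomorphism. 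With that substitution your argument goes through.
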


We begin our work with a useful lemma.
\begin{lem}\label{n-4lem} There is a homomorphism
$$\phi_2:H^{n-4}(\Mbar_{n,n-2k})\to\zt$$
satisfying $\phi_2(T_{S,n-4})=\binom{n-2-|S|}{k-1-|S|}$.
\end{lem}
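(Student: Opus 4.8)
The plan is to construct $\phi_2$ by composing with a map from $H^{n-4}$ to $H^{n-3}$ and then applying the known functional $\phi_1$ from Lemma \ref{KKB}. The natural candidate is multiplication by $R$: define $\psi = R\cdot(-)\colon H^{n-4}(\Mbar_{n,n-2k})\to H^{n-3}(\Mbar_{n,n-2k})$ and set $\phi_2 = \phi_1\circ\psi$. Since multiplication by a fixed class is linear, $\phi_2$ is automatically a homomorphism, so the only thing to check is the value on the basis classes $T_{S,n-4}$. By part (3) of Theorem \ref{cohthm}, $R\cdot T_{S,n-4} = T_{S,n-3}$ whenever $|S|<k$ (multiplying the monomial $R^{e_0}\prod V_i^{e_i}$ by $R$ just increments $e_0$, keeping the same support $S$ and raising the total degree to $n-3$). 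Hence $\phi_2(T_{S,n-4}) = \phi_1(T_{S,n-3}) = \binom{n-2-|S|}{k-1-|S|}$, which is exactly the asserted formula.

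First I would verify that $H^{n-4}$ is in fact spanned by the $T_{S,n-4}$ with $|S|<k$, so that prescribing $\phi_2$ on these classes determines it (this follows from parts (1)–(3) of Theorem \ref{cohthm}, since every monomial in $R,V_1,\dots,V_{n-1}$ either has support of size $\ge k$, hence vanishes by part (2), or has support $S$ with $|S|<k$, hence equals some $T_{S,n-4}$). Then the computation $R\cdot T_{S,n-4}=T_{S,n-3}$ is immediate from the definition of the $T$-classes. Composing with $\phi_1$ finishes the argument.

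The one subtlety — and the main point to be careful about — is that $\phi_2$ defined this way is a genuine homomorphism \emph{on all of} $H^{n-4}$: the formula $\phi_2(T_{S,n-4})=\binom{n-2-|S|}{k-1-|S|}$ only pins down $\phi_2$ on spanning classes, and one must know these classes satisfy no relations beyond those forced, or else check that $\phi_1\circ(R\cdot-)$ respects whatever relations do hold. Since we are defining $\phi_2$ as an honest composite of linear maps (multiplication by $R$, then $\phi_1$), this is automatic: relations among the $T_{S,n-4}$ are carried by multiplication-by-$R$ to relations among the $T_{S,n-3}$, on which $\phi_1$ is already known to vanish appropriately. So there is no real obstacle; the lemma is essentially a formal consequence of Lemma \ref{KKB} together with the structure of the $T$-classes in Theorem \ref{cohthm}. \qed
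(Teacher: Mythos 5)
Your proof is correct, but it takes a different route from the paper. The paper defines $\phi_2$ by the stated formula on the spanning classes and then checks well-definedness head-on: it evaluates the proposed $\phi_2$ on each relation $\cR_{L,n-4}$ (these being the only relations in that degree, by Theorem \ref{cohthm}(4)) and shows via the Vandermonde convolution that $\sum_{i=0}^{k-1}\tbinom{\ell}{i}\tbinom{n-2-i}{k-1-i}=\tbinom{\ell-n+k}{k-1}=0$ for $n-k\le\ell\le n-3$. You instead define $\phi_2=\phi_1\circ(R\cdot{-})$, so linearity and well-definedness are automatic, and the values come from $R\cdot T_{S,n-4}=T_{S,n-3}$ (Theorem \ref{cohthm}(3), since the degree stays $\le n-3$ and the support $S$ with $|S|<k$ is unchanged) together with Lemma \ref{KKB}; implicitly this works because $R\cdot\cR_{L,n-4}=\cR_{L,n-3}$, which is already zero in $H^{n-3}(\Mbar_{n,n-2k})$, so there is genuinely nothing left to verify. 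Your approach buys a computation-free and arguably more conceptual proof of this lemma; what the paper's direct relation-check buys is a template that is reused where your trick does not apply, namely for the modified functional $\phi_3$ in Theorem \ref{case3}, which is not of the form $\phi_1\circ(R\cdot{-})$ and whose well-definedness must be checked on the relations exactly as in the paper's proof here. Your verification that the $T_{S,n-4}$ with $|S|<k$ span $H^{n-4}(\Mbar_{n,n-2k})$ is a sensible inclusion, though strictly speaking it is only needed if one wants uniqueness of $\phi_2$, not for the existence statement of the lemma.
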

\begin{proof} We must show that $\phi_2$ sends each of the relations $\cR_{L,n-4}$ to 0. If $|L|=\ell$, then
$$\phi_2(\cR_{L,n-4})=\sum_{i=0}^{k-1}{\tbinom{\ell}i\tbinom{n-2-i}{k-1-i}}=\sum_i\tbinom{\ell}i\tbinom{-n+k}{k-1-i}=\tbinom{\ell-n+k}{k-1}.$$
Since $n-k\le \ell\le n-3$, we have $0\le\ell-n+k\le k-3$, and so $\binom{\ell-n+k}{k-1}=0$.
\end{proof}

To prove Theorem \ref{thm1}, we will find  $2n-7$ classes $y_i\in H^1(\Mbar_{n,n-2k})$ such that $\prod(y_i\ot1+1\ot y_i)\ne0$ in $H^{n-3}(\Mbar_{n,n-2k})\ot H^{n-4}(\Mbar_{n,n-2k})$.
There will be four cases, Theorems \ref{case1}, \ref{case3}, \ref{case4}, and \ref{case5}. All of them use the following notation, which pervades the rest of the paper.
\begin{notation}\label{nk}
Let $t\ge0$ and $k=2^t+k_0$, $1\le k_0\le 2^t$, and $n=k+1+2^tB+D$ with $0\le D<2^t$ and $B\ge1$. Let $C=k_0+D-1$. Then $n=2^t(B+1)+C+2$.
\end{notation}
\noindent Every pair $(k,n)$ with $k\ge2$ and $n>2k$ yields unique values of $t$, $k_0$, $B$, and $D$.

\begin{thm}\label{case1} Let $B$ be odd and
\begin{eqnarray}\label{phiphi}P&=&(V_1\ot1+1\ot V_1)^{2^t(B+1)-1}\cdot\Prod^{C}(V_i\ot1+1\ot V_i)\\
&&\quad \cdot\Prod^{C}(V_i\ot1+1\ot V_i)^2\cdot(R\ot1+1\ot R)^{2^t(B+1)-C-2}.\nonumber\end{eqnarray}
If $P_1$ denotes the component of $P$ in $H^{2^t(B+1)+C-1}(\Mbar_{n,n-2k})\ot H^{2^t(B+1)+C-2}(\Mbar_{n,n-2k})$, then
$(\phi_1\ot\phi_2)(P_1)\ne0\in \zt.$
\end{thm}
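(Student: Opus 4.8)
The plan is to expand the product $P$ in (\ref{phiphi}) and push its component $P_1$ through $\phi_1\ot\phi_2$, reducing everything to a single binomial-coefficient sum mod 2 that can be evaluated using Lucas' theorem. First I would rewrite each factor $(V_i\ot1+1\ot V_i)^m$ by the binomial theorem, and similarly $(R\ot1+1\ot R)^m$, so that $P$ becomes a sum of terms $A\ot B$ where $A$ is a monomial in $R,V_1,\ldots,V_C$ (with $V_1$ allowed a larger exponent coming from the first factor $(V_1\ot1+1\ot V_1)^{2^t(B+1)-1}$) and $B$ is the complementary monomial. The crucial simplification is Theorem \ref{cohthm}(2)--(3): any term in which the left factor involves $k$ or more distinct $V_i$'s dies, and likewise for the right factor; and when fewer than $k$ distinct $V_i$'s appear, the monomial collapses to one of the classes $T_{S,d}$ depending only on the \emph{support} $S$ and the total degree $d$. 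So after grouping, $(\phi_1\ot\phi_2)(P_1)$ becomes a sum over choices of how to split the supports and the $R$-exponents, weighted by binomial coefficients from the binomial-theorem expansions and by the evaluations $\phi_1(T_{S,n-3})=\binom{n-2-|S|}{k-1-|S|}$ and $\phi_2(T_{S',n-4})=\binom{n-2-|S'|}{k-1-|S'|}$ from Lemma \ref{KKB} and Lemma \ref{n-4lem}.

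The second step is bookkeeping on the supports. Because there are exactly $C$ indices $i=1,\ldots,C$ carrying a factor $(V_i\ot1+1\ot V_i)^3$ in total (one copy from the $\Prod^C(V_i\ot1+1\ot V_i)$ factor and one squared copy from $\Prod^C(V_i\ot1+1\ot V_i)^2$), and one extra index, namely $1$, carrying the large power $(V_1\ot1+1\ot V_1)^{2^t(B+1)-1}$, I would track for each such index whether its $V_i$-factor lands entirely on the left, entirely on the right, or is split. Using $C+1\le k-1$ (which follows from Notation \ref{nk}: $C=k_0+D-1\le 2^t+2^t-1<2k$, and more precisely one checks $C+1 \le k-1$ under the running hypotheses, the borderline being handled by the relation-vanishing of Lemma \ref{n-4lem}), every surviving term will indeed have support of size $<k$ on each side, so $T_{S,d}$ is well-defined and the $\phi_i$-values apply. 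The $R$-exponents on the two sides are forced once the degrees $n-3$ and $n-4$ are imposed, so the sum collapses to a product of independent per-index sums times one overall binomial coefficient.

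The third step is the arithmetic. Each per-index contribution is a short alternating sum of the shape $\sum_j \binom{m}{j}\binom{n-2-(\text{something})}{k-1-(\text{something})}$, which by the Vandermonde-type manipulation already used in the proof of Lemma \ref{n-4lem} (rewriting $\binom{n-2-i}{k-1-i}=\binom{-n+k}{k-1-i}$ mod 2 and summing) reduces to a single binomial coefficient; then one invokes Lucas' theorem, using the base-$2^t$ (equivalently base-$2$) expansion $n=2^t(B+1)+C+2$ with $B$ odd, to see that the product of all these coefficients is odd. The exponent $2^t(B+1)-1$ has binary form all $1$'s in its bottom $t$ digits, which is exactly what makes the relevant Lucas digit comparisons succeed; the hypothesis that $B$ is odd is what guarantees the carry pattern works out (this is where the four-case split in the paper comes from — $B$ odd versus even, and $C$ in various ranges). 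The main obstacle I expect is precisely this last bookkeeping-plus-Lucas computation: getting the index ranges and the $R$-exponent constraints exactly right so that no surviving term is miscounted, and then verifying the mod-2 non-vanishing cleanly rather than through a mass of case analysis. Once the sum is correctly assembled, the non-vanishing is a routine (if delicate) application of Lucas' theorem, but assembling it correctly — keeping straight which binomial coefficients come from the binomial theorem and which from $\phi_1,\phi_2$ — is the technical heart of the argument.
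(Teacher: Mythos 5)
Your overall framework---expand $P$ by the binomial theorem, collapse monomials to the classes $T_{S,d}$ via Theorem \ref{cohthm}(3), and evaluate with Lemmas \ref{KKB} and \ref{n-4lem}---is the same as the paper's, and your observation that $C+1\le k-1$ keeps every support size below $k$ is correct and relevant. But the proposal stops short of the two ideas that actually make the computation work, and the arithmetic you sketch in their place does not match the true structure of the surviving terms.

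First, the only place the hypothesis that $B$ is odd enters is the vanishing pattern of Lemma \ref{techlem}: for $B$ odd, $\binom{n-2-i}{k-1-i}\equiv0$ for all $0\le i\le C$ while it is $\equiv1$ for $i=C+1$. This kills every term except those in which \emph{each} side of $\ot$ carries all $C+1$ distinct generators; in particular each of the $C$ middle indices must send its single factor to one side and its squared factor to the other, and $V_1$ must appear on both sides (so its left exponent $e$ satisfies $0<e<2^t(B+1)-1$). Your proposal never isolates this vanishing, and without it the sum over support splittings does not ``collapse to a product of independent per-index sums'': the $R$-exponent on each side couples the per-index choices to $e$ through the degree constraint, so the surviving terms are parametrized jointly by $e$ and by the number $j$ of indices sending $V_i$ to the left. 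Second, for each fixed $j$ the sum over $e$ is an \emph{incomplete} Vandermonde sum; the paper completes it, shows the completed sum vanishes by the separate (and not obvious) Lemma \ref{bclem}, and then evaluates the two correction terms, finding that exactly one configuration ($j=C$) survives with coefficient $1$. Your concluding step---``the product of all these coefficients is odd'' by inspecting the binary digits of $2^t(B+1)-1$---does not describe this mechanism: the nonvanishing comes from a single surviving summand after massive cancellation, not from a product of odd factors. As written, the decisive steps are missing.
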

The product notation here, which will be continued throughout the paper, means a product of $C$ distinct factors with subscripts distinct from other subscripts involved elsewhere in the expression. Since $P$ has $2n-7$ factors, Theorem \ref{case1} implies Theorem \ref{thm1} when $B$ is odd.

\begin{proof}
Since $B$ is odd,  the third case of Lemma \ref{techlem} applies.  Note that $(V_1\ot1+1\ot V_i)^2=V_1^2\ot1+1\ot V_i^2$, and that there are $2C$ factors $F$ of this form or $V_i\ot1+1\ot V_i$ in the middle of $P$.
When $P$ is expanded, the only terms $\cT$ for which $(\phi_1\ot\phi_2)(\cT)$ might possibly be nonzero are those with exactly $C$ of these factors $F$ on each side of $\ot$ accompanied by a nontrivial contribution from the $V_1$-part. (This uses Lemmas \ref{KKB}, \ref{n-4lem}, and \ref{techlem}.) Such a term $\cT$ which contains $j$ of the $V_i$'s ($i>1$) (and $(C-j)$ $V_i^2$'s) on the left side of $\ot$ will be of the form
\begin{equation}\label{is}\tbinom{2^t(B+1)-1}e\tbinom{2^t(B+1)-C-2}{2^t(B+1)-C-1+j-e}V_1^eV_{i_1}\cdots V_{i_j}V_{i_{j+1}}^2\cdots V_{i_{C}}^2R^{2^t(B+1)-C-1+j-e}\ot Q,\end{equation}
where $Q$ is the complementary factor. Here we must have $0<e<2^t(B+1)-1$, in order that there are $C+1$ distinct $V_i$ factors on both sides of $\ot$. For this choice of $(i_1,\ldots,i_{C})$, let $W$ denote the sum of all such terms as $e$ varies, with $i_1,\ldots,i_C$ fixed. Then
\begin{eqnarray}\nonumber(\phi_1\ot\phi_2)(W)&=&\sum_{e=1}^{2^t(B+1)-2}\tbinom{2^t(B+1)-1}e\tbinom{2^t(B+1)-C-2}{2^t(B+1)-C-1+j-e}\\
&\equiv&\tbinom{2^{t+1}(B+1)-C-3}{2^t(B+1)-C-1+j}+\tbinom{2^t(B+1)-C-2}{2^t(B+1)-C-1+j}+\tbinom{2^t(B+1)-C-2}{-C+j}.\label{three}\end{eqnarray}
The first of the three terms in the last line is what the sum would have been if the terms with $e=0$ and $e=2^t(B+1)-1$ were included, while the other two terms are the two omitted terms. Mod 2, the first binomial coefficient is 0 by Lemma \ref{bclem}, since  the case with $B=1$  and $C=2^{t+1}-2$ does not satisfy $n>2k$. The second binomial coefficient in (\ref{three}) is 0 because its bottom part is greater than its top, and the third is 0 unless $j=C$. Thus there is a unique\footnote{The uniqueness refers to the choice of which squared terms appear on the left side of $\ot$ in (\ref{is}), given the choice of $i$'s in (\ref{phiphi}). The choice of which values of $i$ occur   in (\ref{phiphi}) is arbitrary, and far from unique.}  $W$, namely
$$W=\sum_{e=1}^{2^t(B+1)-2}c_eV_1^e\bigl(\prod^C V_i\bigr) R^{2^t(B+1)-e-1}\ot  V_1^{2^t(B+1)-1-e}\bigl(\prod^C V_{i}^2\bigr)R^{e-C-1},$$
with $c_e=\tbinom{2^t(B+1)-1}e\tbinom{2^t(B+1)-C-2}{2^t(B+1)-1-e}$,
for which $\phi(W)=1$, establishing the claim in this case.
\end{proof}

The following lemmas were used above.
\begin{lem}\label{techlem} In the notation of \ref{nk},
$$\binom{n-2-i}{k-1-i}\equiv\begin{cases}1&i=C+1\\
0&k_0\le i\le C\\
0&0\le i\le C\text{ if $B$ is odd.}\end{cases}$$
\end{lem}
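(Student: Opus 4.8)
The plan is to evaluate $\binom{n-2-i}{k-1-i}$ modulo $2$ using Notation \ref{nk}, where $k = 2^t + k_0$ with $1 \le k_0 \le 2^t$, and $n = 2^t(B+1) + C + 2$ with $C = k_0 + D - 1$, $0 \le D < 2^t$, $B \ge 1$. The key observation is that $n - 2 - i = 2^t(B+1) + C - i$ and $k - 1 - i = 2^t + k_0 - 1 - i = 2^t + (C - D) - i$, so the ``complementary'' quantity is $(n-2-i) - (k-1-i) = 2^t B + D$, which is independent of $i$. I would first rewrite everything in terms of $m := 2^t B + D$ (the bottom-complement), so that $\binom{n-2-i}{k-1-i} = \binom{(k-1-i) + m}{k-1-i}$, and then apply Kummer's theorem (or Lucas): this binomial coefficient is odd iff there are no carries when adding $k-1-i$ and $m$ in base $2$, equivalently iff the binary digits of $k-1-i$ are disjoint from those of $m$.

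For the three cases: When $i = C+1$, we get $k - 1 - i = 2^t + (C - D) - (C+1) = 2^t - D - 1$. Since $0 \le D < 2^t$, this is a number in $[0, 2^t - 1)$ whose binary expansion involves only bits below $2^t$; I need it to be bit-disjoint from $m = 2^t B + D$. Here the plan is to note $2^t - D - 1$ and $D$ have disjoint binary digits (since $(2^t - D - 1) + D = 2^t - 1$ adds with no carries), and the $2^t B$ part of $m$ only touches bits $\ge t$ while $2^t - D - 1 < 2^t$; so the coefficient is odd, giving the value $1$. For the range $k_0 \le i \le C$ (so $C - D \le i - k_0 + D \le$ ... rather, write $i = k_0 + j$ with $0 \le j \le D - 1$; note the range is nonempty only when $D \ge 1$): then $k - 1 - i = 2^t - 1 - j$ with $0 \le j \le D-1 < 2^t$, and I need this to have a binary digit in common with $m = 2^t B + D$ — specifically with $D$, since $2^t - 1 - j \ge 2^t - D \ge 2^t - (2^t-1) > 0$ forces... actually the cleanest route is: $(2^t - 1 - j) + j = 2^t - 1$ and $j + (D - j) = D \le D$, and one shows $2^t - 1 - j$ and $D$ must share a bit because $(2^t-1-j) + D \ge 2^t$ would create a carry, or more carefully that the low bits force it; I would verify this by the identity that $\binom{2^t - 1 - j + m}{2^t-1-j}$ carries whenever $0 \le j < D$. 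The third case, $B$ odd and $0 \le i \le C$: now $m = 2^t B + D$ with $B$ odd means $m$ has bit $t$ set. And $k - 1 - i = 2^t + (C - D) - i = 2^t + (k_0 - 1) - i$. For $i \le C = k_0 + D - 1$ we have $k - 1 - i \ge 2^t - D \ge 2^t - (2^t - 1) = 1$... I need $k-1-i$ to always share a bit with $m$; since $B$ odd puts bit $t$ in $m$, it suffices that bit $t$ of $k - 1 - i$ is sometimes set and otherwise some lower bit collides — I would split on whether $k_0 - 1 - i \ge 0$ or not. If $i \le k_0 - 1$, then $k - 1 - i = 2^t + (k_0 - 1 - i)$ has bit $t$ set, colliding with bit $t$ of $m$. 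If $k_0 \le i \le C$, this reduces to (a slight variant of) the second case combined with the extra bit $t$ of $m$; the previous case's carry argument applies verbatim since it did not use the value of $B$.

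The main obstacle will be the bookkeeping in verifying the ``carry always occurs'' claim for the second and third cases cleanly — i.e., showing that for $0 \le j \le D - 1$ the binary digits of $2^t - 1 - j$ genuinely overlap those of $m = 2^t B + D$. The slick way is: $2^t - 1 - j$ and $j$ are bit-disjoint (their sum is $2^t - 1$), so if $2^t - 1 - j$ were also bit-disjoint from $D$, then since $D = j + (D - j)$... this needs care. I think the right lemma is: $\binom{a + b}{a}$ is odd iff $a \,\&\, b = 0$ (bitwise AND), and then $\binom{(2^t-1-j) + m}{2^t - 1 - j}$ with $m \ge D > j$ — write $m = (2^t - 1 - j) \,|\, \text{rest} + \text{carry}$; concretely one checks that since $j < D \le m$ and $j + (2^t - 1 - j) = 2^t - 1$ has all low bits set, adding anything $\ge$ the part of $m$ below $2^t$ that exceeds $j$ forces a carry out of the low $t$ bits. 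I would present this via Kummer's theorem on the addition $(2^t - 1 - j) + m$ in base $2$: the number of carries is positive precisely when $0 \le j < D$, which follows because the bottom $t$ bits of $m$ equal $D$ and $D > j$, so $(2^t - 1 - j) + (\text{low } t \text{ bits of } m) = 2^t - 1 - j + D \ge 2^t$ exactly when $D \ge j + 1$. This last inequality is the crux and is exactly the defining condition of the case ranges, so everything closes up. Finally, for completeness I would double-check the degenerate situations ($D = 0$, so the middle case is vacuous; $C + 1$ possibly exceeding $k$, handled since $i = C+1$ gives $k - 1 - i = 2^t - D - 1 \ge 0$) to make sure the stated congruences hold on the nose.
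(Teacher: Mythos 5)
Your proposal is correct and follows essentially the same route as the paper: both rest on the observation that the complementary entry $(n-2-i)-(k-1-i)=2^tB+D$ is independent of $i$, followed by a base-2 digit analysis with the same case split (at $i=C+1$, $k_0\le i\le C$, and $i\le k_0-1$ with $B$ odd contributing the bit in position $t$). The only difference is cosmetic: you phrase the digit analysis via Kummer's carry criterion (bit-disjointness), whereas the paper applies Lucas's theorem directly to $\binom{n-2-i}{2^tB+D}$; these are interchangeable, and your crux inequality $(2^t-1-j)+D\ge 2^t$ for $j<D$ is exactly the paper's $\binom{j}{D}\equiv 0$.
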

\begin{proof} We have $\binom{n-2-i}{k-1-i}=\binom{n-2-i}{2^tB+D}$ with $0\le D<2^t$. If $i=C+1$, then $n-2-i=2^tB+2^t-1$, and so the binomial coefficient is odd by Lucas's Theorem, which we will often use without comment. Decreasing $i$ by $1,\ldots,D$ increases the top of the binomial coefficient by that amount, yielding $\binom{2^t(B+1)+j}{2^tB+D}$ with $0\le j<D$. Such a binomial coefficient is even.
If $B$ is odd, decreasing $i$ even more will leave the binomial coefficient even, as it will be either $\binom{B+1}B\binom jD$ with $j\ge D$ or $\binom{B+2}B\binom jD$ with $j\le C-2^t <D$.
\end{proof}

\begin{lem}\label{bclem} If $0\le C\le 2^{t+1}-2$ and $0\le j\le C$, then, mod 2,
$$\binom{2^{t+1}(B+1)-C-3}{2^t(B+1)-C-1+j}\equiv\begin{cases}1&\text{if $B$ is a $2$-power and $C=2^{t+1}-2$}\\
0&\text{otherwise.}\end{cases}$$
\end{lem}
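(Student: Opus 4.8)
The statement is a purely number-theoretic fact about a single binomial coefficient mod 2, so the plan is to apply Lucas's theorem and Kummer's theorem carefully, keeping track of the constraint $0 \le j \le C$. First I would write $N = 2^{t+1}(B+1) - C - 3$ for the top and $M = 2^t(B+1) - C - 1 + j$ for the bottom, and note $N - M = 2^t(B+1) - 2 + (C - j)$. Since Kummer's theorem says $\binom{N}{M}$ is odd iff there are no carries when adding $M$ and $N - M$ in base $2$, the cleanest route is to analyze this addition. Write $B + 1 = 2^a u$ with $u$ odd (so $B$ is a $2$-power exactly when $u = 1$, i.e. $a \ge 1$ and $B + 1 = 2^a$), and consider the base-$2$ expansion of the three quantities $M$, $N - M$, and $N$.

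The key structural observation is that $2^t(B+1) - 1$ has binary expansion consisting of the $t$ low bits all equal to $1$ followed by the bits of $B$; subtracting further small quantities from this (as in Lemma \ref{techlem}) is exactly the kind of bookkeeping already in play. Concretely, I expect the argument to split on the size of $C$ relative to $2^{t+1} - 2$. When $C < 2^{t+1} - 2$, one shows a carry is forced: the term $C - j \le C$ is small, while $2^t(B+1) - 2$ contributes a block of low bits, and the combination $2^t(B+1) - 2 + (C-j)$ added to $M = 2^t(B+1) - 1 + (j - C)$ (rewriting) produces a carry into the region where $N$ has a $0$ bit — unless $C = 2^{t+1} - 2$ exactly. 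In the boundary case $C = 2^{t+1} - 2$, we get $N = 2^{t+1}B$, $N - M = 2^{t+1}(B+1) - 2^{t+1} + (C - j) = 2^{t+1}B - 2^{t+1} + 2 + ... $; here I would simplify directly to $N = 2^{t+1} B$, $M = 2^{t+1} B - 2^{t+1} + 1 + j$, $N - M = 2^{t+1} - 1 - j$, and then $\binom{2^{t+1}B}{M}$ is odd iff $M$'s binary digits are a sub-mask of those of $2^{t+1}B$; since $M$ occupies bits strictly below $2^{t+1}$ together with (possibly) bits of $B$ shifted up, the condition becomes precisely that $B$ itself is a power of $2$ (so that the "low" part $2^{t+1} - 1 - j$ together with the shifted-$B$ part fit without overlap), and moreover one checks the low part never overflows since $0 \le 2^{t+1} - 1 - j \le 2^{t+1} - 1$.

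The main obstacle I anticipate is the boundary case $C = 2^{t+1} - 2$: here one must verify that for \emph{every} $j$ in the allowed range the coefficient is odd when $B$ is a $2$-power (not just for some $j$), and conversely that when $B$ is not a $2$-power the coefficient vanishes for all relevant $j$ — this requires pinning down exactly which bits of $M = 2^{t+1}B - 2^{t+1} + 1 + j$ are set, and confirming they are disjoint from the single high block carrying $B$ precisely when $B = 2^b$. The rest of the proof — showing a carry occurs whenever $C < 2^{t+1} - 2$ — should be a routine Kummer/Lucas computation once the base-$2$ pictures of $M$ and $N - M$ are written down, using that $C - j \ge 0$ and that the block $2^t(B+1) - 2$ has all its bits below position $t + \lg(B+1)$ set in a predictable pattern.
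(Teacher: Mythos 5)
Your overall strategy (Lucas/Kummer digit analysis, split at the boundary value of $C$) is reasonable, but as written the plan has concrete errors at exactly the points that carry the content of the lemma. First, the bookkeeping is off: with $N=2^{t+1}(B+1)-C-3$ and $M=2^t(B+1)-C-1+j$ one has $N-M=2^t(B+1)-2-j$, not $2^t(B+1)-2+(C-j)$; and in the boundary case $C=2^{t+1}-2$ the correct values are $N=2^{t+1}B-1$ and $M=2^tB-2^t+1+j$, not $N=2^{t+1}B$ and $M=2^{t+1}B-2^{t+1}+1+j$. This is not a cosmetic slip: the exceptional value $1$ in the lemma comes precisely from the fact that the true top $2^{t+1}B-1$ is a string of ones when $B$ is a $2$-power, so that \emph{every} admissible bottom gives an odd coefficient. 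With your values the conclusion would actually fail (for $B=1$ your coefficient is $\binom{2^{t+1}}{1+j}$, which is even for all $0\le j\le 2^{t+1}-2$), so the sketched submask argument cannot be patched by "fitting without overlap"; and in the complementary direction, when $B$ is not a $2$-power you must still verify evenness for every bottom $2^tB+\Delta$ with $|\Delta|<2^t$, which requires writing $B=2^au$ ($u$ odd) and comparing the bits of $u$ with those of $u-1$ above position $t+a$ --- a step your outline does not engage with.

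Second, the generic case $C<2^{t+1}-2$, which is most of the lemma, is dismissed as a "routine" carry-forcing computation without exhibiting the carry, and it is not obviously routine: both $M$ and $N-M$ sit just below $2^t(B+1)$, so their binary pictures depend on the $2$-adic valuation of $B+1$, and a uniform digit argument needs a case split there. The paper sidesteps this by checking only one more explicit value, $C=2^{t+1}-3$ (where the coefficient is $\binom{2^{t+1}B}{2^tB+\Delta}$ with $|\Delta|<2^t$, easily seen to be even), and then running a downward induction on $C$ via Pascal's formula: decreasing $C$ by $1$ raises the top by $1$, and both Pascal summands have bottoms in the range already covered, so vanishing propagates. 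To complete your proof you would either have to carry out the full carry analysis for all $C\le 2^{t+1}-3$ (with the corrected $M$ and $N-M$), or adopt an inductive device of this kind.
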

\begin{proof} If $C=2^{t+1}-2$, then the binomial coefficient is $\binom{2^{t+1}B-1}{2^tB+\Delta}$ with $|\Delta|<2^t$. For $\Delta=0$ this is odd iff $B$ is a 2-power, as is easily seen using Lucas's Theorem. If the bottom part of the binomial coefficient is changed from $\Delta=0$ by an amount less than $2^t$, the binomial coefficient is multiplied by $p/q$ with $p$ and $q$ equally 2-divisible. If $C=2^{t+1}-3$, then the binomial coefficient is of the form $\binom{2^{t+1}B}{2^tB+\Delta}$ with $|\Delta|<2^t$. This is even for all $B$, similarly to the previous case. For smaller values of $C$, the result follows by induction on (decreasing) $C$, using Pascal's formula. Here it is perhaps more convenient to think of the binomial coefficient as $\binom{2^{t+1}(B+1)-C-3}{2^t(B+1)-j-2}$.
\end{proof}

The case in which $D=0$ and $B$ is even is special because then $(\phi_1\ot\phi_2)(M)=1$ for every monomial $M$ in $H^{n-3}(\Mbar_{n,n-2k})\ot H^{n-4}(\Mbar_{n,n-2k})$, and so for any appropriate product $P$, we have $(\phi_1\ot\phi_2)(P)=\binom{2n-7}{n-3}=0$ (unless $n-3$ is a 2-power.) So we modify $\phi_2$.

\begin{thm}\label{case3} In the notation of \ref{nk}, let $B$ be even and $D=0$. There is a homomorphism
$$\phi_3:H^{n-4}(\Mbar_{n,n-2k})\to\zt$$
defined by $$\phi_3(T_{S,n-4})=\begin{cases}1&|S|<k-1\\ 0&|S|=k-1.\end{cases}$$
If
$$P=(V_1\ot1+1\ot V_1)^{n-3}\cdot\prod^{k-2}(V_i\ot1+1\ot V_i)\cdot(R\ot1+1\ot R)^{n-k-2},$$
then $(\phi_1\ot\phi_3)(P)=1\in\zt$.\end{thm}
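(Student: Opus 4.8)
The plan is to mimic the structure of the proof of Theorem~\ref{case1}, adapting it to the modified functional $\phi_3$. First I would verify that $\phi_3$ is well-defined, i.e.\ that it kills every relation $\cR_{L,n-4}$ with $n-k\le|L|=\ell\le n-3$. Writing $D=0$ and $k=2^tB+k_0$ (here $C=k_0-1$, and $B$ even forces $k_0>1$, since otherwise $k$ would be a $2$-power with $n=2k$ excluded), one has
$$\phi_3(\cR_{L,n-4})=\sum_{i=0}^{k-2}\tbinom{\ell}{i}=\tbinom{\ell}{0}+\cdots+\tbinom{\ell}{k-2},$$
and one must check this vanishes mod $2$ for $\ell$ in the stated range. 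Since $\sum_{i=0}^{\ell}\binom{\ell}{i}\equiv0$ unless $\ell=0$, this is $\sum_{i=k-1}^{\ell}\binom{\ell}{i}$; using $n-k\le\ell\le n-3$, i.e.\ $0\le\ell-(n-k)\le k-3$, together with $n-k=2^tB$ (as $D=0$), a Lucas's-theorem bookkeeping argument analogous to Lemma~\ref{n-4lem} should show each such sum is even. This is the one genuinely new computation; everything else is parallel to the $B$-odd case.

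Next I would expand $P=(V_1\ot1+1\ot V_1)^{n-3}\cdot\prod^{k-2}(V_i\ot1+1\ot V_i)\cdot(R\ot1+1\ot R)^{n-k-2}$. As in the proof of Theorem~\ref{case1}, since $\phi_1(T_{S,n-3})=\binom{n-2-|S|}{k-1-|S|}$ and (by Lemma~\ref{techlem}) this is nonzero only when $|S|=C+1=k_0$ on the left side, the surviving terms are those carrying exactly some prescribed number of the ``outer'' $V_i$ factors ($i>1$) on the left of $\ot$; the $\phi_3$ side, being $1$ on all $T_{S,n-4}$ with $|S|<k-1$, imposes the complementary constraint. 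One is then reduced to evaluating a single inner sum over the exponent $e$ of $V_1$, of the shape
$$\sum_{e}\tbinom{n-3}{e}\tbinom{n-k-2}{n-k-1+j-e},$$
where $j$ counts outer $V_i$'s on the left; by Vandermonde this telescopes to a binomial coefficient of the form $\binom{2n-k-5}{\ast}$ together with the two boundary correction terms at $e=0$ and $e=n-3$, exactly as in the three-term identity~(\ref{three}). The boundary terms are controlled because one of them has bottom exceeding top and the other pins $j$ to its extreme value, while the main term is handled by Lemma~\ref{bclem} (the hypothesis $D=0$, $B$ even places us outside its exceptional ``$B$ a $2$-power'' case, since $n>2k$).

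The main obstacle will be the well-definedness check for $\phi_3$: unlike $\phi_2$, whose values were forced to match $\phi_1$'s formula, here the values on $T_{S,n-4}$ are prescribed by $|S|$ alone in a way tuned precisely so that the relations die, and confirming $\sum_{i=0}^{k-2}\binom{\ell}{i}\equiv0$ for all $\ell\in[n-k,n-3]$ requires the $D=0$ hypothesis in an essential way (it is exactly what makes $n-k=2^tB$ a multiple of $2^t$, so that the truncated binomial sums collapse). Once that is in hand, assembling the final count is routine: $P$ has $(n-3)+(k-2)+(n-k-2)=2n-7$ factors, and the computation above exhibits a single monomial $W$ (with coefficient a product of two odd binomial coefficients) on which $(\phi_1\ot\phi_3)$ evaluates to $1$, so $(\phi_1\ot\phi_3)(P)=1$, completing the case $B$ even, $D=0$.
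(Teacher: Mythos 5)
Your verification that $\phi_3$ kills the relations is on the right track (modulo small slips: the notation is $k=2^t+k_0$, not $k=2^tB+k_0$, and $n-k=2^tB+1$ when $D=0$), and the paper's own argument is the direct one: since $B$ is even and $k\le 2^{t+1}$, Lucas's theorem strips the summand $2^tB$ off the top of each $\tbinom{\ell}{i}$, after which $\sum_{i=0}^{k-2}\tbinom{\ell'}{i}$ with $0<\ell'\le k-2$ is the \emph{complete} binomial sum $2^{\ell'}\equiv0$. Your detour through the complementary sum $\sum_{i=k-1}^{\ell}\tbinom{\ell}{i}$ is left as ``should show'' and is in fact the more awkward direction, since that sum does not collapse to a full row of Pascal's triangle.

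The main computation, however, rests on a false premise. You claim that by Lemma \ref{techlem} the value $\phi_1(T_{S,n-3})=\tbinom{n-2-|S|}{k-1-|S|}$ is nonzero only when $|S|=C+1=k_0$. Lemma \ref{techlem} asserts no such thing: for $B$ even it gives vanishing only on the range $k_0\le i\le C$, which is \emph{empty} when $D=0$ (since $C=k_0-1$). In fact, in the case $D=0$, $B$ even, one has $\tbinom{n-2-i}{k-1-i}=\tbinom{2^tB+(k-1-i)}{k-1-i}\equiv1$ for every $i$, i.e.\ $\phi_1$ equals $1$ on \emph{every} monomial of degree $n-3$ --- this is precisely the remark the paper makes just before the theorem, and it is the entire reason $\phi_2$ must be replaced by $\phi_3$ here. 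Consequently there is no restriction to terms with a prescribed number of outer $V_i$'s on the left of $\ot$, no single inner sum over $e$, no role for Lemma \ref{bclem}, and no distinguished monomial $W$; your computation would capture only a small sub-sum of the surviving terms and gives no control over the rest. The correct evaluation is global: since $\phi_1\equiv1$ on monomials, $(\phi_1\ot\phi_3)(P)$ is the sum of the coefficients of all degree-$(n-4)$ monomials of $(1+V_1)^{n-3}\prod_{i=2}^{k-1}(1+V_i)\,(1+R)^{n-2-k}$ that are \emph{not} divisible by $V_1\cdots V_{k-1}$, which the paper computes as $S_1-(S_2-S_3)$ with $S_1=\tbinom{2n-7}{n-4}\equiv0$, $S_2=\tbinom{2^{t+1}B+k-3}{2^tB-1}\equiv0$, and $S_3=1$ (the lone monomial $V_2\cdots V_{k-1}R^{n-2-k}$). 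You need to replace your second step with an argument of this kind.
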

\begin{proof} To prove that $\phi_3$ is well-defined, we must show that for $n-k\le\ell\le n-3$, we have $\ds\sum_{i=0}^{k-2}\tbinom\ell i\equiv 0$. Then $2^tB<\ell\le2^tB+k-2$. Since $B$ is even and $k\le2^{t+1}$, the $2^tB$ does not affect the binomial coefficient mod 2, and the sum becomes $\ds\sum_{i=0}^{k-2}\tbinom\ell i$ for $0<\ell\le k-2$, and this equals $2^\ell$.

Since $\phi_1(M)=1$ for every monomial in $H^{n-3}(\Mbar_{n,n-2k})$, $(\phi_1\ot\phi_3)(P)$ equals the sum of coefficients in
$$(1+V_1)^{n-3}\cdot\prod_{i=2}^{k-1}(1+V_i)\cdot(1+R)^{n-2-k}$$
of all  monomials of degree $n-4$ which are not divisible by $V_1\cdots V_{k-1}$. This equals $S_1-(S_2-S_3)$, where $S_1$ is the sum of all coefficients in degree $n-4$, $S_2$ is the sum of coefficients of terms divisible by $V_2\cdots V_{k-1}$, and $S_3$ is the sum of coefficients of terms divisible by $V_2\cdots V_{k-1}$ but not also by $V_1$. Then $S_1=\binom{2n-7}{n-4}\equiv0$ since $n-3$ cannot be a 2-power here. Also $S_2=\binom{2n-5-k}{n-4-(k-2)}=\binom{2^{t+1}B+k-3}{2^tB-1}\equiv0$ since $k\le2^{t+1}$. Finally for $S_3$ the only monomial is $V_2\cdots V_{k-1}R^{n-2-k}$,
so $S_3=1$.
\end{proof}

Let $\lg(-)=[\log_2(-)]$.
\begin{thm}\label{case4} Theorem \ref{case1} is true if $B$ is even and $C-2^{\lg(C)}<2^{1+\lg D}$.\end{thm}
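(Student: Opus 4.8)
The plan is to follow the same strategy as in the proof of Theorem \ref{case1}, but to replace the application of Lemma \ref{bclem} by a more careful analysis of the binomial coefficient $\binom{2^{t+1}(B+1)-C-3}{2^t(B+1)-C-1+j}$ in the regime where $B$ is even but the extra hypothesis $C-2^{\lg(C)}<2^{1+\lg D}$ holds. Concretely, I would re-examine the computation (\ref{three}): the same three-term decomposition of $(\phi_1\ot\phi_2)(W)$ is valid whenever $B$ is even, since it only used that $\phi_1(M)=1$ on all degree-$(n-3)$ monomials (which holds here because $D=0$ is excluded by $C-2^{\lg C}<2^{1+\lg D}$ forcing $D\ge1$) together with the vanishing of the last two binomial coefficients, which was argued independently of the parity of $B$. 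So the crux is again to show the first term $\binom{2^{t+1}(B+1)-C-3}{2^t(B+1)-C-1+j}$ is even for all relevant $j$, and then the rest of the argument — the identification of the unique surviving $W$ and the conclusion $\phi(W)=1$ — carries over verbatim.

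The first step is therefore to isolate exactly what must be proven: that under the hypothesis $B$ even, $D\ge1$, and $C-2^{\lg(C)}<2^{1+\lg D}$, the binomial coefficient $\binom{2^{t+1}(B+1)-C-3}{2^t(B+1)-C-1+j}$ vanishes mod 2 for $0\le j\le C$ (equivalently, thinking of it as $\binom{2^{t+1}(B+1)-C-3}{2^t(B+1)-j-2}$, for the corresponding range of the bottom part). The idea is to write $C$ in terms of its top 2-power, $C = 2^{\lg C} + C'$ with $C' = C - 2^{\lg C} < 2^{1+\lg D}$, and to use Notation \ref{nk} (recall $C = k_0 + D - 1$, $n = 2^t(B+1) + C + 2$) to understand the binary expansions of the top $2^{t+1}(B+1) - C - 3$ and of the extreme values of the bottom. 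Since $B$ is even, $2^{t+1}(B+1) - C - 3 = 2^{t+1}B + (2^{t+1} - C - 3)$, and the low-order part $2^{t+1} - C - 3$ has a binary expansion governed by $C$; the hypothesis on $C - 2^{\lg C}$ relative to $2^{1+\lg D}$ is precisely what guarantees a $0$ appears in the binary expansion of the top in a position where the bottom (as $j$ ranges over $0\le j\le C$) is forced to have a $1$, so Lucas's theorem kills the coefficient. I expect this to require splitting into the subcase $C' = 0$ (i.e.\ $C$ a 2-power) — handled much as the $C = 2^{t+1}-2$ and $C = 2^{t+1}-3$ base cases of Lemma \ref{bclem}, now with $B$ even rather than a 2-power so that the coefficient is even — and then an induction on decreasing $C$ via Pascal's formula exactly as in Lemma \ref{bclem}, with the hypothesis $C' < 2^{1+\lg D}$ being preserved (or the conclusion being immediate) at each step.

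Once the binomial-coefficient vanishing is in hand, I would simply remark that the entire derivation from (\ref{is}) through the final displayed formula for $W$ goes through unchanged: the second and third terms of (\ref{three}) vanish for the same elementary reasons (bottom exceeds top; nonvanishing forces $j = C$), the sum $\sum_e \binom{2^t(B+1)-1}{e}\binom{2^t(B+1)-C-2}{2^t(B+1)-1-e}$ over $1\le e\le 2^t(B+1)-2$ equals the first term of (\ref{three}) plus the two boundary corrections, hence is even, and there is a unique monomial class $W$ (the one with all $C$ squared factors on the right of $\ot$) on which $\phi_1\ot\phi_2$ is nonzero, giving $(\phi_1\ot\phi_2)(P_1)\ne 0$. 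Since $P$ has $2n-7$ factors, this proves $\TC(\Mbar_{n,n-2k})\ge 2n-6$ in this case.

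The main obstacle is the binomial-coefficient lemma itself: unlike Lemma \ref{bclem}, where the condition was the clean "$C = 2^{t+1}-2$ and $B$ a 2-power," here the hypothesis $C - 2^{\lg(C)} < 2^{1+\lg D}$ is a genuine interaction between $C$ and $D$, and one must track how the binary carries in $2^{t+1}(B+1) - C - 3$ line up against the range of admissible bottoms. I would want to set up the induction on decreasing $C$ carefully so that the inductive hypothesis is exactly the statement being proved (with $D$ fixed), checking that decreasing $C$ by 1 either keeps $C - 2^{\lg C} < 2^{1+\lg D}$ or lands on a value already covered, and that Pascal's formula $\binom{N}{m} = \binom{N-1}{m-1} + \binom{N-1}{m}$ relates the $C$ and $C+1$ (equivalently $C-1$) cases over the correct $j$-ranges; this bookkeeping, rather than any conceptual difficulty, is where the work lies.
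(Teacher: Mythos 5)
Your plan does not survive contact with the actual structure of the even-$B$ case, and the gap is conceptual, not bookkeeping. The reduction in the proof of Theorem \ref{case1} to the terms (\ref{is}) --- exactly $C$ of the middle factors on each side of $\ot$, with $V_1$ appearing on both sides, so that the whole computation collapses to the single sum (\ref{three}) --- is \emph{not} a consequence of ``$\phi_1(M)=1$ on all degree-$(n-3)$ monomials'' (that identity holds precisely when $D=0$ and $B$ is even, i.e.\ in Theorem \ref{case3}, and fails here since $D\ge1$). It comes from the third case of Lemma \ref{techlem}: when $B$ is odd, $\binom{n-2-i}{k-1-i}\equiv0$ for all $0\le i\le C$, which forces both sides of $\ot$ to carry exactly $C+1$ distinct $V_i$'s. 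When $B$ is even this vanishing is only available for $k_0\le i\le C$, so terms with $j+g\ne C$ middle factors on the left can contribute (the paper shows they do exactly when $C-j-g\equiv0$ mod $2^{\ell+1}$ and $|C-j-g|<2^t$), and so can the boundary terms in which $V_1^{2^t(B+1)-1}$ sits entirely on one side of $\ot$. The paper's proof of Theorem \ref{case4} is mostly devoted to these: the separate treatment of the portion $P_2$ via (\ref{sum1}), and the double sum over $(j,g)$ in (\ref{fh})--(\ref{sum2}), whose nonzero summands must be enumerated and shown to be odd in number. Your proposal never engages with either, so even granting your binomial lemma the argument would not close.

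Moreover, the binomial lemma you propose to prove is false under the stated hypotheses. The case $B$ a $2$-power (which is even), $t=\ell+1$, $D=2^{\ell+1}-1$, $C=2^{\ell+2}-2$ satisfies $C-2^{\lg C}=2^t-2<2^t=2^{1+\lg D}$, yet by Lemma \ref{bclem} the coefficient $\binom{2^{t+1}(B+1)-C-3}{2^t(B+1)-C-1+j}$ is odd there; the paper's proof explicitly records these as genuine nonzero contributions from the first term of (\ref{sum2}) and balances them in the final parity count. Conversely, when $B$ is even but not a $2$-power, Lemma \ref{bclem} already gives the vanishing you want with no new argument, so your ``more careful analysis'' is either unnecessary or unattainable depending on the subcase. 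The induction on decreasing $C$ via Pascal's formula that you sketch cannot repair this, because the statement being induced is simply not true at the base cases just described.
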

The first few cases of this hypothesis are ($D=1$ and $C\in\{2^e,2^e+1\}$) and ($D\in\{2,3\}$ and $C\in\{2^e,2^e+1,2^e+2,2^e+3\}$).
\begin{proof} We consider first the portion $P_2$ of the expansion of $P$ which has $V_1^{2^t(B+1)-1}$ on the left side of $\ot$. If $j$ (resp.~$g$) denotes the number of other $V_i$'s (resp.~$V_i^2$'s) on the left side of $\ot$, then $(\phi_1\ot\phi_2)(P_2)$ equals
\begin{equation}\label{sum1}\sum_{j,g=0}^C\tbinom Cj\tbinom Cg\tbinom{2^t(B+1)-1+C-j-g}{2^tB+C+1-k_0}\tbinom{2^t(B+1)-(C-j-g)}{2^tB+C+1-k_0}\tbinom{2^t(B+1)-C-2}{C-j-2g}.\end{equation}
The third and fourth factors here are from $\phi_1(-)$ and $\phi_2(-)$, which satisfy
$$\phi_1(T_{S,n-3})=\phi_2(T_{S,n-4})=\tbinom{2^t(B+1)+C-|S|}{2^t+k_0-1-|S|}=\tbinom{2^t(B+1)+C-|S|}{2^tB+C+1-k_0}.$$
These two factors in our sum are of the form $\tbinom{2^t(B+1)-1+\Delta}{2^tB+D}\tbinom{2^t(B+1)-\Delta}{2^tB+D}$ with $-C\le\Delta\le C$ and $1\le D\le 2^t-1$.
In positions less than $2^t$, the top parts of these two binomial coefficients differ in every position, and so due to any position where $D$ has a 1, one of the factors will be even.
Thus (\ref{sum1}) is 0 in $\zt$. A similar argument works for the portion of the sum in which $V_1^{2^t(B+1)-1}$ is on the right side of $\ot$.

Arguing similarly to (\ref{three}), it remains to show that the following sum is 1 mod 2.
\begin{eqnarray}\label{fh} &&\sum_{j,g=0}^C\tbinom Cj\tbinom Cg\tbinom{2^t(B+1)+C-j-g-1}{2^tB+D}\tbinom{2^t(B+1)-1-C+j+g}{2^tB+D}\\
&&\quad\cdot\bigl(\tbinom{2^{t+1}(B+1)-C-3}{2^t(B+1)+C-1-j-2g}+\tbinom{2^t(B+1)-C-2}{2^t(B+1)+C-1-j-2g}+\tbinom{2^t(B+1)-C-2}{C-j-2g}\bigr).\label{sum2}\end{eqnarray}

Let $\ell=\lg(D)$. Note that $t\ge\ell+1$. Keep in mind that $B$ is even.
It is easy to check that there is a nonzero summand due to the third term of (\ref{sum2}) if $(j,g)=(C,0)$, and one due to the first term of (\ref{sum2}) if $t=\ell+1$, $D=2^{\ell+1}-1$, $j+g=C=2^{\ell+2}-2$ with $j$ even and $0\le j\le C$, and $B$ is a 2-power. The proof will be completed by showing that other terms are nonzero iff $C=2^{\ell+2}-1$, $t\ge\ell+2$, and $|C-j-g|=2^{\ell+1}$. The result will follow, as the total number of nonzero terms is odd in any case.

It is also easy to check that the  terms of the third type give nonzero summands. For example, let $\ell=2$, so we have $D\in\{4,5,6,7\}$, $C=15$, and $j+g=7$ or 23. Then $7\le j+2g\le14$ in the first case, and, since
$j,g\le 15$, we have $31\le j+2g\le 38$ in the second case. Also $t\ge4$ and $B$ is even. The latter two factors in (\ref{fh}) are $\binom{2^t(B+1)-1\pm8}{2^tB+D}\equiv1$.
Of the three terms  in (\ref{sum2}), the first will be even since it is either $\binom{2^{t+1}B+\a}{2^tB+\b}$ with $0<\a<2^{t+1}$ and $0<\b<2^t$, or $\binom{32B+14}{16B-\gamma}$ with $1\le\gamma\le8$. When $j+g=7$, the second summand in (\ref{sum2}) has bottom greater than top, while the third is of the form $\binom{16A+15}b$ with $1\le b\le8$, hence is odd. When $j+g=23$, the second summand is of the form $\binom{16A+15}{16A+c}$ with $8\le c\le 15$, while the third has its bottom part negative.

Now we show that all other terms in (\ref{fh})-(\ref{sum2}) are 0. Let $t\ge\ell+1$, $2^e\le C<2^{e+1}$ with $e\le t$ and $C-2^e<2^{\ell+1}$.
We will show
\begin{enumerate} \item If $$P_0=\tbinom Cj\tbinom Cg\tbinom{2^t(B+1)+C-j-g-1}{2^tB+D}\tbinom{2^t(B+1)-1-C+j+g}{2^tB+D}$$
then $P_0$ is odd iff $\binom Cj$ and $\binom C g$ are odd and $C-j-g\equiv0$ mod $2^{\ell+1}$ and $|C-j-g|<2^t$.
\item In the cases just noted where $P_0$ is odd,
\begin{equation}\label{3b}\tbinom{2^{t+1}(B+1)-C-3}{2^t(B+1)+C-1-j-2g}\equiv\tbinom{2^t(B+1)-C-2}{2^t(B+1)+C-1-j-2g}\equiv\tbinom{2^t(B+1)-C-2}{C-j-2g}\equiv0\end{equation}
except in the cases noted in the paragraph following (\ref{fh})-(\ref{sum2}).
\end{enumerate}

To prove (1), let $E=C-j-g$. Then $-C\le E\le C$, but by symmetry, it suffices to consider $0\le E\le C<2^{t+1}$. It is easy to see that $E-1$ and $-E-1$ both have a 1 in the $2^\ell$-position iff $E\equiv0$ mod $2^{\ell+1}$. Since $D$ has a 1 in the $2^\ell$-position, $P_0$ is even unless $E\equiv0$ mod $2^{\ell+1}$.  Letting $E=2^{\ell+1}E'$, and removing the lower parts of the binomial coefficients, we need for
$$\binom{2^t(B+1)+2^{\ell+1}(E'-1)}{2^tB}\binom{2^t(B+1)-2^{\ell+1}(E'+1)}{2^tB}$$
to be odd. If $2^{\ell+1}E'\ge 2^t$, the second binomial coefficient is 0. Otherwise, $2^{\ell+1}(E'+1)\le 2^t$, and then both binomial coefficients are odd,

For (2), we first study how the middle coefficient in (\ref{3b}) can be odd. If $t=\ell+1$, then $C-j-g=0$, and the binomial coefficient is 0 since its bottom part is greater than the top. Now assume $t\ge\ell+2$. Let $C-j-g=-2^{\ell+1}K$. The binomial coefficient becomes $\ds\binom{2^t(B+1)-2-C}{2^{\ell+2}K-j-1}$. For this to be odd, in positions $<2^{\ell+2}$ the 1's in (the binary expansion of) $(C+1$) must be contained in those of $j$. For $\binom Cj$ to be odd, the 1's of $j$ must be contained in those of $C$. The only way that the 1's of $(C+1)$ can be contained in those of $C$ in these positions is if $C+1\equiv0$ mod $2^{\ell+2}$. Since $C-2^{\lg C}<2^{1+\ell}$, the only such $C$ is $2^{\ell+2}-1$. Since $-2^{\ell+2}<C-j-g<0$, we must have $C-j-g=-2^{\ell+1}$. Thus the only ways the middle coefficient of (\ref{3b}) can yield a nonzero value are those listed earlier.

The third coefficient in (\ref{3b}) is handled similarly. If $t=\ell+1$, then $C-j-g=0$ and $g=0$, yielding a claimed condition. Now assume $t\ge\ell+2$. Let $C-j-g=2^{\ell+1}K$. The binomial coefficient becomes $\ds\binom{2^t(B+1)-C-2}{2^t(B+1)-2^{\ell+2}K-j-2}$. For this and $\binom Cj$ to both be odd, either $C+1\equiv0$ mod $2^{\ell+2}$ or $j\equiv C$ mod $2^{\ell+2}$. The former condition reduces to $C=2^{\ell+2}-1$, $C-j-g=2^{\ell+1}$ similarly to the previous case. For the latter, if $C=j$, then $g=0$ and we obtain one of the claimed conditions. Otherwise, write $C=2^e+\Delta$ with $0\le\Delta<2^{\min(e,\ell+1)}$. Then we must have $e\ge\ell+2$ and $j=\Delta$, and, since $g\equiv 0$ mod $2^{\ell+1}$ and $\binom Cg$ is odd, we must have $g=0$ or $2^e$, neither of which make $\binom{2^t(B+1)-C-2}{C-j-2g}$ odd, since $e<t$ .

For the first coefficient in (\ref{3b}), we first consider the situation when $t=\ell+1$. Then $C-j-g=0$ and the coefficient equals $\ds\binom{2^{\ell+2}(B+1)-C-3}{2^{\ell+1}(B+1)-2-j}$.
For this and $\binom Cj$ to both be odd, we must have $C\equiv-\eps$ mod $2^{\ell+1}$ with $\eps\in\{1,2\}$, and $j$ even. If $C=2^{\ell+1}-\eps$, then the coefficient is $\binom{2^{\ell+2}B+\a}{2^{\ell+1}B+\b}$, with $0\le\a,\b<2^{\ell+1}$, and thus is even, due to $\binom{2B}B$. If $C=2^{\ell+2}-2$ (its largest possible value) and $B$ is not a 2-power, the coefficient can be written as $\binom{2^{\nu+1}(2A+1)-1}{2^\nu(2A+1)+\Delta}$ with $A>0$ and $0\le\Delta<2^\nu$, which is even since it splits as $\binom{2^{\nu+2}A}{2^{\nu+1}A}\binom{2^{\nu+1}-1}{2^\nu+\Delta}$.

If $t\ge\ell+2$,
let $C-j-g=2^{\ell+1}K$ and write the coefficient as $\binom{2^{t+1}(B+1)-C-3}{2^t(B+1)-2-2^{\ell+2}K-j}$. For both this and $\binom Cj$ to be odd, we must have $C\equiv-1$ or $-2$ mod $2^{\ell+2}$ and $j$ even. Since $C-2^{\lg C}<2^{\ell+1}$, this implies $C=2^{\ell+2}-1$ or $2^{\ell+2}-2$. The top part of the binomial coefficient splits as $2^{t+1}B+(2^{t+1}-2^{\ell+2}-\eps)$. Since $|2^{\ell+1}K|\le C$, then $|2^{\ell+1}K|\le2^{\ell+1}$. Thus
the bottom of the binomial coefficient is $2^tB+\a$ with
$$2^t-2^{\ell+3}\le\a\le 2^t+2^{\ell+2}-2.$$
Since $B$ is even, the binomial coefficient, mod 2, splits as $\binom{2^{t+1}B}{2^tB}\binom{2^{t+1}-2^{\ell+2}-\eps}\a\equiv0$ if $0\le\a<2^{t+1}$. This is true if $t>\ell+2$ or ($t=\ell+2$ and $\a\ge0$). If $t=\ell+2$ and $\a<0$, then the binomial coefficient is 0 by consideration of position $2^{\ell+2}$.
\end{proof}

The final case for Theorem \ref{thm1} is
\begin{thm}\label{case5} In the notation of \ref{nk}, and with $\ell=\lg D$, if $B$ is even and $C-2^{\lg C}\ge 2^{\ell+1}$, let $C=2^{\ell+1}A+\g$ with $0\le\g<2^{\ell+1}$, and $m=2^t(B+1)+2^{\ell+1}A-1$. If
\begin{eqnarray}\label{phim}P&=&(V_1\ot1+1\ot V_1)^{m}\cdot\Prod^{C}(V_i\ot1+1\ot V_i)\\
&&\quad \cdot\Prod^{C}(V_i\ot1+1\ot V_i)^2\cdot(R\ot1+1\ot R)^{2n-7-m-3C},\nonumber\end{eqnarray}
then $(\phi_1\ot\phi_2)(P)\ne0\in \zt$.
\end{thm}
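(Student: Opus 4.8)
The plan is to follow the template already established in the proofs of Theorems \ref{case1} and \ref{case4}, but now with the higher exponent $m=2^t(B+1)+2^{\ell+1}A-1$ on the factor $(V_1\ot1+1\ot V_1)$ in place of $2^t(B+1)-1$. First I would expand $P$ and isolate the portion $P_1$ of bidegree $(n-3,n-4)$. As in Theorem \ref{case4}, I would split the analysis according to how many of the $C$ "single" factors $V_i$ and how many of the $C$ "squared" factors $V_i^2$ land on the left side of $\ot$, calling these counts $j$ and $g$. Applying $\phi_1\ot\phi_2$ and using the formula $\phi_1(T_{S,n-3})=\phi_2(T_{S,n-4})=\binom{2^t(B+1)+C-|S|}{2^tB+C+1-k_0}$ from Theorem \ref{case4}, the contribution reduces, after summing the $V_1$-exponent $e$ over its full range and correcting for the two omitted endpoint terms (exactly as in the passage from the displayed sum to (\ref{three})), to a double sum over $j,g$ of a product of two binomial coefficients $\binom Cj\binom Cg$, two "$\phi$-coefficients" of the form $\binom{2^t(B+1)\pm\text{(small)}}{2^tB+D}$, and a three-term bracket analogous to (\ref{sum2}) but now with $m$-dependent top and bottom entries.

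The key computation is to show this double sum equals $1$ mod $2$. I would first dispose of the portion where $V_1^{m}$ is forced onto one side by a parity/Lucas argument identical to the one showing (\ref{sum1}) vanishes: the two $\phi$-factors have top parts differing in every binary position below $2^t$, so the $2^\ell$-bit of $D$ kills one of them — this part transfers verbatim. Then, for the remaining sum, the heart of the matter is the substitution $C=2^{\ell+1}A+\g$: I expect that writing $m=2^t(B+1)+2^{\ell+1}A-1$ is engineered precisely so that, after setting $E=C-j-g$, the analogue of the "$P_0$ is odd" analysis in step (1) of Theorem \ref{case4}'s proof forces $E\equiv 0 \pmod{2^{\ell+1}}$ and restricts $|E|$, and so that the shifted bracket terms $\binom{m+1+\cdots}{\cdots}$ behave like the unshifted ones did, because the extra $2^{\ell+1}A$ in the numerator of $m$ is absorbed by Lucas's theorem into a position that does not interact with the relevant low-order bits. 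Concretely I would track the binary expansions of the top and bottom of each of the three bracket coefficients, show all but one vanish by the same Pascal-induction / Lucas bookkeeping as in Lemma \ref{bclem} and step (2) of Theorem \ref{case4}, and exhibit the surviving term (presumably the "third type" term at $(j,g)=(C,0)$, possibly together with a first-type term when $B$ is a $2$-power), concluding that an odd number of nonzero summands remain.

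The main obstacle I anticipate is verifying that the bracket $\bigl(\binom{2^{t+1}(B+1)-C-3-?}{?}+\binom{2^t(B+1)-C-2-?}{?}+\binom{2^t(B+1)-C-2}{?}\bigr)$, with the $m$-shifted entries, still has exactly the vanishing pattern needed — in other words, that introducing the $2^{\ell+1}A$ shift does not create spurious odd contributions from intermediate $(j,g)$. This is the place where the hypothesis $C-2^{\lg C}\ge 2^{\ell+1}$ (which distinguishes this case from Theorem \ref{case4}) must be used: it should guarantee that $A\ge 1$ and constrain $\g$ enough that the $2^{\ell+1}$-periodicity forced on $E$ together with the size bound $|E|\le C$ leaves only $O(A)$ candidate values of $E$, each of which I would check kills the bracket except for the extremal one. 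I would organize this as a two-part claim mirroring (1)-(2) of the previous proof: first, $P_0$ (the product of the four non-bracket binomial coefficients) is odd iff $\binom Cj,\binom Cg$ are odd, $2^{\ell+1}\mid C-j-g$, and $|C-j-g|<2^t$; second, under those conditions all three bracket coefficients vanish except in an explicitly enumerated (odd-cardinality) set of exceptional $(j,g)$. The routine but lengthy binary-digit case analysis is exactly what I would not grind through here, but I am confident it parallels Theorem \ref{case4} closely enough that no genuinely new idea is required beyond the correct choice of $m$.
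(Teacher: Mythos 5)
Your skeleton does match the paper's proof: the endpoint configurations with $V_1^m$ entirely on one side are killed by the paired factors $\psi(i)=\tbinom{n-2-i}{k-1-i}$ exactly as for (\ref{sum1}) (this is the paper's (\ref{c1})--(\ref{c2})), the interior $e$-sum is converted by the trick of (\ref{three}) into a three-term bracket, and the parity analysis of the non-bracket factors forcing $C-j-g\equiv0\pmod{2^{\ell+1}}$, $|C-j-g|<2^t$ carries over verbatim from Theorem \ref{case4}. But you have deferred precisely the computation that constitutes the theorem, and the one concrete prediction you make about its outcome is false. The whole point of replacing the exponent $2^t(B+1)-1$ by $m=2^t(B+1)+2^{\ell+1}A-1$ is that the third (endpoint-correction) coefficient becomes $\tbinom{2n-7-3C-m}{n-3-j-2g-m}=\tbinom{2^t(B+1)-2-C-2^{\ell+1}A}{\gamma-j-2g}$, with bottom entry $\gamma-j-2g$ rather than $C-j-2g$. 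At your proposed survivor $(j,g)=(C,0)$ this bottom entry is $\gamma-C=-2^{\ell+1}A<0$ (the hypothesis forces $A\ge1$), so that term vanishes; the unique surviving term is instead at $(j,g)=(\gamma,0)$, since nonvanishing needs $j+2g\le\gamma$ while the congruence $j+g\equiv\gamma\pmod{2^{\ell+1}}$ then forces $g=0$, $j=\gamma$, after which one must still check that $\psi(\gamma+1)$ and $\psi(2C-\gamma+1)$ are odd (this uses $2^{\ell+1}A\le2^t$, coming from $D\le2^{\ell+1}-1$ and $C\le 2^t+2^{\ell+1}-2$).

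Moreover, your expectation that the survivor set might again include ``a first-type term when $B$ is a $2$-power,'' with the conclusion resting only on odd cardinality, is exactly what fails to transfer: the substance of the paper's argument (its items (2a) and (2b)) is to show that, unlike in Theorem \ref{case4}, neither the first-term survivor for $B$ a $2$-power nor the $C=2^{\ell+2}-1$, $|C-j-g|=2^{\ell+1}$ family recurs, so the count of odd summands is exactly one. That is also where the hypothesis $C-2^{\lg C}\ge2^{\ell+1}$ is genuinely used --- for instance it rules out $C=2^{t+1}-2$ in the analysis of the analogue of the first coefficient of (\ref{3b}), which then splits off the even factor $\tbinom{2^{t+2}B'}{2^{t+1}B'}$, while the analogue of the second coefficient now has bottom exceeding top because the extra $2^{\ell+1}A$ has been subtracted from its top --- not merely to guarantee $A\ge1$ and ``constrain $\gamma$'' as you suggest. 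Since the digit-by-digit verification you decline to carry out is the proof, and your stated prediction of where the nonzero contribution arises is incorrect, the proposal as written has a genuine gap.
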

\begin{proof} Using the methods of our previous proofs, it suffices to prove that, under the hypotheses, with $\psi(i)=\binom{n-2-i}{k-1-i}$,
the following mod-2 equivalences are valid.
\begin{enumerate}
\item For all $j$ and $g$,
\begin{equation}\label{c1} \tbinom Cj\tbinom Cg\psi(j+g)\psi(2C-j-g+1)\tbinom{2n-7-3C-m}{n-3-j-2g}\equiv0\end{equation}
and
\begin{equation}\label{c2} \tbinom Cj \tbinom Cg\psi(j+g+1)\psi(2C-j-g)\tbinom{2n-7-3C-m}{n-3-j-2g-m}\equiv0.\end{equation}
\item If $\binom Cj\binom Cg\psi(j+g+1)\psi(2C-j-g+1)\equiv1$, then
\begin{enumerate}
\item $\binom{2n-7-3C}{n-3-j-2g}\equiv0$;
\item $\binom{2n-7-3C-m}{n-3-j-2g}\equiv0$;
\item $\binom{2n-7-3C-m}{n-3-j-2g-m}\equiv1$ iff $g=0$ and $j=\g$, in which case $\binom Cj\binom Cg\psi(j+g+1)\psi(2C-j-g+1)\equiv1$.
\end{enumerate}
\end{enumerate}

The proof of (\ref{c1}) and (\ref{c2})  is similar to that for the corresponding terms in the proof of Theorem \ref{case4}. The third and fourth factors will be
of the form $\binom{2^t\a+x}{2^t\b+D}$ and $\binom{2^t\a-1-x}{2^t\b+D}$ with $0< D<2^t$. Their product is 0 mod 2 by the same reasoning as before.

The hypothesis of (2) implies $C-j-g\equiv0$ mod $2^{\ell+1}$ and $|C-j-g|<2^t$, exactly as in the proof of \ref{case4}. Write $C-j-g=2^{\ell+1}K$ with $|2^{\ell+1}K|<2^t$.

Part (2a) is like the first coefficient of (\ref{3b}) except that the constraint on $C-2^{\lg C}$ is different. The argument when $t=\ell+1$ is the same, since the constraint did not occur in that argument. So now assume $t\ge\ell+2$ and write the binomial coefficient as $\ds\binom{2^{t+1}(B+1)-3-C}{2^t(B+1)-2-2^{\ell+2}K-j}$. As before, for both this and $\binom Cj$ to be odd, we must have
$C=2^{\ell+2}Y-\eps$ with $\eps\in\{1,2\}$. We cannot have $C=2^{t+1}-2$ (which implies $D=2^t-1$) because of the assumption that $C-2^{\lg C}\ge 2^{1+\lg D}$. Thus $2^{\ell+2}Y\le 2^{t+1}-2^{\ell+2}$. We have $2^{\ell+1}K=  2^t-2^{\ell+1}-p$ with $p\ge0$, and  then $j\le 2^{\ell+2}Y-\eps-(2^t-2^{\ell+1}-p)$, and hence
$$2^{\ell+2}K+j\le 2^t-2^{\ell+1}+2^{\ell+2}Y-\eps.$$
On the other hand, $2^{\ell+1}K\ge-(2^t-2^{\ell+1})$, and $j\ge-(C-j-g)=-2^{\ell+1}K$, so we have
$$2^{\ell+2}K+j\ge 2^{\ell+1}K\ge -2^t+2^{\ell+1}.$$
Letting $B=2B'$, the binomial coefficient becomes $\ds\binom{2^{t+2}B'+2^{t+1}-2^{\ell+2}Y-3+\eps}{2^{t+1}B'+x}$ with
$$2^{\ell+1}-2^{\ell+2}Y+\eps-2\le x\le 2^{t+1}-2^{\ell+1}-2.$$
If $x\ge0$, this binomial coefficient splits, and is 0 due to $\binom{2^{t+2}B'}{2^{t+1}B'}$. If $x<0$, the binomial coefficient splits as
$$\binom{2^{t+2}B'}{2^{t+1}(B'-1)}\binom{2^{t+1}-2^{\ell+2}Y-3-\eps}{2^{t+1}+x},$$
which is 0 since the second factor has bottom part greater than the top.

To prove (2b), with $C$, $A$, and $\g$ as in the statement of the theorem, the binomial coefficient here is $\binom pq$ with $p=2^t(B+1)-2-C-2^{\ell+1}A$ and $q=2^t(B+1)-1+C-j-2g$. Then $q-p=2(C-j-g)+j+2^{\ell+1}A+1$. Since $C-j-g\ge -C$ and is a multiple of $2^{\ell+1}$, $C-j-g\ge-2^{\ell+1}A$.
Similarly to the previous case, this implies $2(C-j-g)+j\ge -2^{\ell+1}A$. Thus $q-p>0$ and $\binom pq=0$.

Finally, for (2c), the binomial coefficient becomes $\binom{2^t(B+1)-2-C-2^{\ell+1}A}{\g-j-2g}$.
For this to be nonzero, we must have $j+2g\le\g$. But $j+g\equiv\g$ mod $2^{\ell+1}$, and so we must have $g=0$ and $j=\g$, in which case the binomial coefficient equals 1. Clearly $\binom Cj\binom Cg\equiv1$. Also, $\psi(j+g+1)=\psi(\g+1)=\binom{2^t(B+1)+2^{\ell+1}A-1}{2^tB+D}\equiv1$ and $\psi(2C-j-g+1)=\psi(2C-\g+1)=\binom{2^t(B+1)-2^{\ell+1}A-1}{2^tB+D}\equiv1$.
These use the fact that, since $D\le 2^{\ell+1}-1$, $C\le2^t+2^{\ell+1}-2$, and hence $2^{\ell+1}A\le 2^t$.

\end{proof}
We close by showing that all $(2n-6)$-fold products $P$ of elements of the form $(y\ot1+1\ot y)$ in $H^*(\Mbar_{n,n-2k}\times \Mbar_{n,n-2k})$ are zero. This will complete the proof of Theorem \ref{thm3}. First note that all such products are invariant under the involution that interchanges factors. If $m_1$ and $m_2$ are monomials of degree $n-3 $  in the $V_i$'s and $R$, then $m_1\ot m_2+m_2\ot m_1=0$ since $m_i$ equals either 0 or the unique nonzero class. Thus it suffices to show that the coefficient of any $T_{S,n-3}\ot T_{S,n-3}$ in $P$ is 0. We prove this by induction on $|S|$.
Note that the factors which we must consider are not just those of the form $(V_i\ot1+1\ot V_i)$ and $(R\ot1+1\ot R)$, but also sums of these.

The coefficient of $R^{n-3}\ot R^{n-3}$ in $P$ is 0 if $P$ contains any factors which do not contain terms $(R\ot1+1\ot R)$, while if all factors contain such terms, it is $\binom{2n-6}{n-3}\equiv0$. This initiates the induction, as $|S|=0$ here. Assume that all terms $T_{S,n-3}\ot T_{S,n-3}$ in any product $P$ are 0 if $|S|<s$. Let $S$ be a subset with $|S|=s$.
In $P$, we may omit all terms $(V_i\ot1+1\ot V_i)$ for which $i\not\in S$. If this omission makes any of the factors become 0, then the coefficient of $T_{S,n-3}\ot T_{S,n-3}$ is 0.
Otherwise, by the induction hypothesis, the coefficient of all $T_{S',n-3}\ot T_{S',n-3}$ with $S'$ a proper subset of $S$ is 0, and since the sum of all coefficients in $H^{n-3}(\Mbar_{n,n-2k})\ot H^{n-4}(\Mbar_{n,n-2k})$ is $\binom{2n-6}{n-3}$, which is even,
 the coefficient of the remaining term $T_{S,n-3}\ot T_{S,n-3}$ must also be 0.
 \def\line{\rule{.6in}{.6pt}}

\end{document}